\documentclass[12pt]{amsart}

% Outputs compile time for each pdf page in logs (for debugging).
% \usepackage{atbegshi}
% \newcommand\showtimer{%
%   \message{^^Jtimer: \the\numexpr\the\pdfelapsedtime*1000/65536\relax}%
%   \pdfresettimer}
% \AtBeginDocument{\showtimer}
% \AtBeginShipout {\showtimer}

% Extra text options packages
\usepackage{lipsum} % gibberish
\usepackage[utf8]{inputenc} % extra characters
\usepackage[T1,OT2,T1]{fontenc} % accents (I think)

% DISPLAY:
\usepackage[dvipsnames]{xcolor} % more colours
\usepackage{mathtools} % fine-tuning of math environments
\usepackage{etoolbox} % fine-tuning of class
\usepackage[shortlabels]{enumitem} % fine-tuning of lists
\usepackage{hhline, diagbox} % fine-tuning of tabulars, arrays, and tables
% Makes margins wider to account for bigger font
\setlength{\textwidth}{15cm}
\setlength{\textheight}{22.5cm}
\calclayout
% Bibliography spacing
\let\oldbibliography\thebibliography
\renewcommand{\thebibliography}[1]{\oldbibliography{#1}
\setlength{\itemsep}{5pt}}

% For referencing equations lemmas and stuff.
\usepackage[hypertexnames=false]{hyperref}
\hypersetup{colorlinks = true,
  linkcolor = Blue,
  urlcolor  = Blue,
citecolor = OliveGreen}
\urlstyle{same}
\usepackage[noabbrev,nameinlink]{cleveref} % another style

% Extra math fonts, symbols, and enviroments
\usepackage{amsfonts, amscd, amsmath, amssymb, amsbooka, amsthm, mathrsfs}
\usepackage{mathdots, stmaryrd} % more dots and a bunch more symbols

% TIKZ:
\usepackage{tikz-cd}
\usetikzlibrary{arrows.meta, positioning, quotes, decorations.markings, decorations.pathreplacing, calligraphy, shapes, matrix, fit, positioning, backgrounds}
% Scaling tikzcd diagram size
\tikzcdset{scale/.style={every label/.append
    style={scale=#1},
cells={nodes={scale=#1}}}}
% Tikz arrows for gluing diagrams
\tikzset{->-/.style={decoration={
      markings,
mark=at position 0.5 with {\arrow{>}}},postaction={decorate}}}
\tikzset{->>-/.style={decoration={
      markings,
mark=at position 0.5 with {\arrow{>>}}},postaction={decorate}}}
\tikzset{->>>-/.style={decoration={
      markings,
mark=at position 0.5 with {\arrow{>>>}}},postaction={decorate}}}
% For rotating labels on vertical arrows
\tikzcdset{vertical/.style={
    anchor=south,
    rotate=90,
inner sep=.5mm}}

% ENVIRONMENTS:
% Defines theorem environments (counted within subsections); this is bolded head and italic text
% Add * after \newtheorem to have unnumbered theorems
\newtheorem{thm}[subsection]{Theorem}
\newtheorem{prop}[subsection]{Proposition}

\newtheorem{cor}[subsection]{Corollary}

% Defines remark environments: small caps head and normal text (in amsart: italic head and normal text)
\theoremstyle{remark}
\newtheorem*{rem}{Remark}
\newtheorem*{ex}{Example}
\newtheorem*{exs}{Examples}
\newtheorem*{asd}{Aside}
% Defines definition environment: bold head and normal text
\theoremstyle{definition}
\newtheorem{df}[subsection]{Definition}

% NUMBERING:
% Makes section numbers depend on chapter
% \numberwithin{section}{chapter} % Comment out when using amsart
% Makes equation numbers depend on sections
% \numberwithin{equation}{section}

\makeatletter
\tagsleft@false\let\veqno\@@eqno % Tags show on right
\makeatother

% Fixes indents in amsbook for theorems, lemmas etc. and for subsections. Doesn't do anything for amsart.
% \expandafter\patchcmd\csname\string\proof\endcsname
%   {\normalparindent}{0pt }{}{}
% \makeatletter
% \def\subsection{\@startsection{subsection}{2}
%     \z@{.5\linespacing\@plus.7\linespacing}{-.5em}
%     {\normalfont\bfseries}}
% \let\sv@thm\@thm
% \def\@thm{\let\indent\relax\sv@thm}
% \makeatother

\setcounter{section}{-1} % For section 0
\usepackage{graphicx}
% ––––––––––––––––––––––––––––––––––––––––––––––––––––––––––––––––––––––––––––
% ––––––––––––––––––––––––––––––––––––––––––––––––––––––––––––––––––––––––––––
% ––––––––––––––––––––––––––––––––––––––––––––––––––––––––––––––––––––––––––––

\title[A Morse Mayer-Vietoris Sequence for Cosheaf Homology]{A Morse Mayer-Vietoris Sequence for Cosheaf Homology on Simplicial Complexes}
\author{Ben Gould}
\address{Mathematical Institute, University of Oxford, Andrew Wiles Building, Radcliffe Observatory Quarter, Woodstock Road, Oxford, OX2 6GG, UK}
\email{bengould2003@gmail.com}

\begin{document}

\begin{abstract}
  We investigate the homology of cosheaves over finite simplicial complexes. After constructing the Mayer-Vietoris short exact sequence for this homology theory, we apply discrete Morse theory to this setting, defining the associated Morse chain complex. The main result is the construction of a \emph{Morse Mayer-Vietoris} short exact sequence of Morse chain complexes, for which we establish a quasi-isomorphism to the standard Mayer-Vietoris sequence. Discussions relating poset-based (co)sheaf definitions to topological ones via Kan extensions and exploring a modern categorical perspective on discrete Morse theory are also included.
\end{abstract}

% Prints title and contents.
\maketitle
\tableofcontents

% –––––––––––––––––––––––––––––––––––BODY–––––––––––––––––––––––––––––––––––––
\section{Introduction}

The primary objective of this essay, as outlined in the project guidance, is to study cosheaf homology on simplicial complexes, with an aim to explore how we can use \emph{discrete Morse theory} (as found in Vidit Nanda's lecture notes \cite{Nan25}) to simplify the computation of the \emph{Mayer-Vietoris sequence}. Following the suggested structure, we will define cosheaves on simplicial complexes and their homology theory (\Cref{section:1}), construct the standard Mayer-Vietoris sequence for these cosheaves (\Cref{section:3}), introduce the Morse chain complex  (\Cref{section:4}), and combine these elements to form the \emph{Morse Mayer-Vietoris sequence} (\Cref{section:6}).

However, this essay shifts some focus from purely computational aspects towards establishing connections with related mathematical areas, aiming to provide deeper theoretical context. Most notably, \Cref{section:2} relates our definitions of sheaves and cosheaves to more standard definitions by following parts of Justin Curry's thesis \cite{Cur14}, and \Cref{section:5} outlines a new point of view developed by Vidit Nanda in \cite{Nan19}, which better situates discrete Morse theory within a more modern homotopy theoretic perspective.

Candidly speaking, the choice of the aforementioned extra topics was made to align with the author's recent interest in category theory. Unfortunately, the inclusion of these discussions has resulted in a rather liberal interpretation of the guideline:
\begin{quote}
  ``It is not \underline{expected} that any submission should exceed 15 pages.''
\end{quote}

\begin{rem}
  Throughout this essay, we use $K$ to refer to a (locally oriented) finite simplicial complex. We use the notation $K_{k}$ for the set of simplices of $K$ with dimension $k$. For an oriented simplex $\sigma=(v_{0},\dots,v_{k})$, we write $\sigma_{-i}$ for the $i$'th face given by $$\sigma_{-i}:=(v_{0},\dots,v_{i-1},v_{i+1},\dots,v_{k}).$$This is the same notation used in the lecture notes \cite{Nan25}.
\end{rem}

\section{Cosheaf homology}\label{section:1}
In this section, we define cosheaves and introduce their basic homology theory, as well as including some examples along the way. Recall from \cite{Nan25} that we define a sheaf on a simplicial complex as a functor $$\mathscr{S}:(K,\leq)\overset{}{\longrightarrow}\mathsf{Vect}_{\mathbb{F}}^{\mathrm{fd}}$$where we have considered $K$ as a poset category of simplices ordered by the face relation. We can define a cosheaf dually:

\begin{df}

  Let $K$ be a finite simplicial complex. A \emph{cosheaf} over $K$ (taking values in $\mathsf{Vect}_{\mathbb{F}}^{\mathrm{fd}}$) is a contravariant functor $$\mathscr{C}:(K,\leq)^{\mathrm{op}}\to \mathsf{Vect}_{\mathbb{F}}^{\mathrm{fd}}.$$

  We write $\mathscr{C}_{\sigma}:=\mathscr{C}(\sigma)$ and call this the \emph{costalk} of $\mathscr{C}$ at $\sigma$. Given an inclusion $\sigma'\leq \sigma$, we will write $\mathscr{C}_{\sigma\geq\sigma'}$ for the map $\mathscr{C}(\sigma\geq \sigma'):\mathscr{C}_{\sigma}\to \mathscr{C}_{\sigma'}$ and call this the \emph{extension map} associated to the inclusion $\sigma\geq\sigma'$. (Note that the order in the subscript $\sigma\geq\sigma'$ is reversed relative to the face relation $\sigma'\leq\sigma$ to clarify the map's direction.)

  We define a \emph{morphism of cosheaves} simply as a natural transformation between the functors. Hence, it will be beneficial to introduce the notation:
  \begin{align*}\mathsf{Shv}(K)&:=\mathsf{Fun}((K,\leq),\mathsf{Vect}^{\mathrm{fd}}_{\mathbb{F}}), \\ \mathsf{CoShv}(K)&:=\mathsf{Fun}((K,\leq)^{\mathrm{op}},\mathsf{Vect}^{\mathrm{fd}}_{\mathbb{F}}).
  \end{align*}

  These form Abelian categories because functor categories inherit the Abelian property from the target category, and $\mathsf{Vect}_{\mathbb{F}}^{\mathrm{fd}}$ is Abelian.
\end{df}

\begin{rem}
  Those familiar with (co)sheaves in other contexts may wish to reconcile these definitions with more standard definitions. We address this in \Cref{section:2}.
\end{rem}

\clearpage

\begin{exs}
  \
  \begin{itemize}
    \item There is a \emph{zero} cosheaf, which we denote $\underline{0}_{K}$, which assigns the zero vector space to every simplex. This also happens to be the zero object in the category of cosheaves on $K$.
    \item The \emph{skyscraper} cosheaf at a simplex $\tau \in K$, $\underline{\mathrm{Sk}}_{\tau}$, which assigns the zero vector space to any simplex $\sigma\neq \tau$ and $\mathbb{F}$ to the simplex $\tau$.
    \item The \emph{constant} cosheaf, $\underline{\mathbb{F}}_{K}$, which takes value $\mathbb{F}$ for all costalks and the identity map for all extension maps.
    \item We can take \emph{direct sums of cosheaves}, which is induced by the direct sum in $\mathsf{Vect}^{\mathrm{fd}}_{\mathbb{F}}$ pointwise. Explicitly, if $\mathscr{C}_{1}$ and $\mathscr{C}_{2}$ are cosheaves on $K$, the direct sum is given by
      \begin{align*}(\mathscr{C}_{1}\oplus \mathscr{C}_{2})({\sigma})&:=\mathscr{C}_{1}(\sigma)\oplus \mathscr{C}_{2}(\sigma),\\(\mathscr{C}_{1}\oplus \mathscr{C}_{2})(\sigma\geq \tau)&:=\mathscr{C}_{1}(\sigma\geq \tau)\oplus \mathscr{C}_{2}(\sigma\geq \tau).
      \end{align*}
    \item Given a cosheaf $\mathscr{C}$ on a subcomplex $L\subseteq K$, we extend $\mathscr{C}$ to a cosheaf on $K$ by
      \begin{align*}\widehat{\mathscr{\hat{}C}}(\sigma)&:=
        \begin{cases}\mathscr{C}_{\sigma}&\text{for $\sigma \in L$},\\0&\text{otherwise},
        \end{cases}\\ \widehat{\mathscr{\hat{}C}}(\sigma\geq \tau)&:=
        \begin{cases}\mathscr{C}_{\sigma\geq \tau}&\text{for $\sigma,\tau \in L$},\\0&\text{otherwise}.
        \end{cases}
      \end{align*}This is called the \emph{extension by zero} cosheaf for $\mathscr{C}$ on $L\subseteq K$.
  \end{itemize}
\end{exs}

We will now define the chain complex associated to a cosheaf $\mathscr{C}$ on a simplicial complex $K$ dually to how the cochain complex for a sheaf is defined in \cite{Nan25}. For ease of notation, it is standard to define the following \emph{incidence symbol} for any two $\sigma,\tau \in K$ as $$[\sigma:\tau]:=
\begin{cases} +1 &\text{if $\tau=\sigma_{-i}$ for even $i$,} \\ -1 &\text{if $\tau=\sigma_{-i}$ for odd $i$,} \\ 0 &\text{otherwise.}
\end{cases}$$

\begin{df}\label{definition:1.2.}

  Given a simplicial complex $K$ and a cosheaf $\mathscr{C}:(K,\leq)^{\mathrm{op}}\to \mathsf{Vect}_{\mathbb{F}}^{\mathrm{fd}}$ on $K$, we define the \emph{chain complex} on $K$ with $\mathscr{C}$-coefficients in each dimension as the vector space, $$C_{k}(K;\mathscr{C}):=\bigoplus_{\sigma \in K_{k}}\mathscr{C}(\sigma)\,,$$ where the direct sum notation is justified as our simplicial complexes are finite (and $\mathsf{Vect}_{\mathbb{F}}^{\mathrm{fd}}$ is an additive category). When $K_{k}$ is empty for some $k\in \mathbb{N}$, the definition above gives the empty direct sum which is the zero vector space.

  We define the boundary (linear) maps $\partial_{k}^{\mathscr{C}}:C_{k}(K;\mathscr{C})\to C_{k-1}(K;\mathscr{C})$ on each component $\mathscr{C}({\sigma})$ (a vector space) of the direct sum $C_{k}(K;\mathscr{C})$ by the map $$\partial_{k}^{\mathscr{C}}\big|_{\sigma}:=\sum_{\tau\in K_{k-1}} [\sigma:\tau]\cdot \mathscr{C}(\sigma\geq \tau)$$Note that we are abusing notation slightly by writing $\mathscr{C}({\sigma\geq \tau})$ where this may not be defined, but in these cases the incidence symbol is zero anyway. We now prove that this does indeed form a chain complex.
\end{df}

To give a more intuitive description of the boundary maps, let's define the symbol $$\partial_{k}^{\mathscr{C}}\big|_{\sigma\geq \tau}:=[\sigma:\tau]\cdot \mathscr{C}(\sigma\geq \tau).$$for the block of the matrix for $\partial^{\mathscr{C}}_{k}$ corresponding to the map $\mathscr{C}_{\sigma}\to \mathscr{C}_{\tau}$. This is shown in the following picture:
\[
  \partial^{\mathscr{C}}_{k}:=
  \begin{tikzpicture}[baseline=(M.center),]
    \matrix (M) [matrix of math nodes,
      left delimiter={[}, right delimiter={]},
      row sep=0em, column sep=1em,
      nodes={anchor=center}
    ]
    {
      {\raisebox{1mm}{$\ddots$}} & {\raisebox{1mm}{$\vdots$}} & {\raisebox{1mm}{$\iddots$}} \\
      \cdots & {\partial^{\mathscr{C}}_{k}\big|_{\sigma\geq\tau}} & \cdots \\
      {\raisebox{1mm}{$\iddots$}} & {\raisebox{1mm}{$\vdots$}} & {\raisebox{1mm}{$\ddots$}} \\
    };
    \node[above=0.5em of M-1-1] {$\cdots$};
    \node[above=0.5em of M-1-2] {$\mathscr{C}(\sigma)$};
    \node[above=0.5em of M-1-3] {$\cdots$};
    \node[left=1.2em of M-1-1]  {\raisebox{-3mm}{$\vdots$}};
    \node[left=0.5em of M-2-1]  {$\mathscr{C}(\tau)$};
    \node[left=1.2em of M-3-1]  {\raisebox{2.5mm}{$\vdots$}};
    \begin{scope}[on background layer]
      \node [fit=(M-1-2)(M-2-2)(M-3-2),
        fill=gray!25,
      inner sep=0pt] {};
      \node [fit={([shift={(-9.3mm,0mm)}]M-2-1.north east)(M-2-2)([shift={(2.4mm,0mm)}]M-2-3.north east)},
        fill=gray!25,
      inner sep=0pt] {};
      \node [fit=(M-2-2), fill=gray!50, inner sep=0pt] {};
      \node [fit=(M-2-2), draw, inner sep=0mm, line width=0.5mm] {};
    \end{scope}
  \end{tikzpicture}
\]
\begin{ex}

  The chain complex associated to the constant cosheaf $\underline{\mathbb{F}}_{K}$ recovers the usual chain complex associated to the simplicial complex $K$, $C_{\bullet}(K;\mathbb{F})$, as discussed in \cite{Nan25}.
\end{ex}

\begin{prop}

  If we have a cosheaf $\mathscr{C}:(K,\leq)^{\mathrm{op}}\to \mathsf{Vect}_{\mathbb{F}}^{\mathrm{fd}}$. Then $C_{k}(K;\mathscr{C})$, as defined above, does indeed form a chain complex.
\end{prop}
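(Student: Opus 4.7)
The plan is to check $\partial^{\mathscr{C}}_{k-1}\circ \partial^{\mathscr{C}}_k = 0$ block by block. Fix $\sigma \in K_k$ and restrict to the $\mathscr{C}(\sigma)$-summand of $C_k(K;\mathscr{C})$. Expanding the composition via the block description of each boundary map gives
$$(\partial^{\mathscr{C}}_{k-1}\circ \partial^{\mathscr{C}}_k)\big|_{\sigma} = \sum_{\tau \in K_{k-1}} \sum_{\rho \in K_{k-2}} [\sigma:\tau][\tau:\rho]\cdot \mathscr{C}(\tau \geq \rho)\circ \mathscr{C}(\sigma \geq \tau).$$
Terms in which $\rho \not\leq \tau$ or $\tau \not\leq \sigma$ vanish, because then one of the incidence symbols is zero (and, to make literal sense of the composition, the corresponding extension map is zero by our convention). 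So we can restrict the sum to chains $\rho \leq \tau \leq \sigma$.

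Next, I would apply functoriality of the contravariant functor $\mathscr{C}$: for any such chain $\rho \leq \tau \leq \sigma$ we have $\mathscr{C}(\tau \geq \rho)\circ \mathscr{C}(\sigma \geq \tau) = \mathscr{C}(\sigma \geq \rho)$. Swapping the order of summation and pulling this common extension map out of the inner sum, the expression becomes
$$\sum_{\substack{\rho \in K_{k-2}\\ \rho \leq \sigma}} \left(\sum_{\rho \leq \tau \leq \sigma} [\sigma:\tau][\tau:\rho]\right)\mathscr{C}(\sigma \geq \rho).$$

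Finally, I would invoke the standard combinatorial identity that $\sum_{\rho \leq \tau \leq \sigma} [\sigma:\tau][\tau:\rho] = 0$ for each such $\rho$; geometrically, this expresses the fact that $\rho$ is contained in exactly two codimension-one faces $\tau$ of $\sigma$, and the two signed incidence products cancel. This is precisely the identity used to show that the ordinary simplicial boundary operator squares to zero, so I would cite it directly from \cite{Nan25} rather than reprove it.

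The only real content is the functoriality step, which is what allows the two extension maps to be replaced by a single one independent of the intermediate face $\tau$; after that, the proposition reduces entirely to the classical $\partial^2 = 0$ for simplicial chains, which I expect to be the only mild subtlety worth flagging.
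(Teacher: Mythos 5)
Your proposal is correct and follows essentially the same route as the paper: expand the composite blockwise, use functoriality of $\mathscr{C}$ to collapse the two extension maps into $\mathscr{C}(\sigma\geq\rho)$, and reduce to the vanishing of the scalar sum $\sum[\sigma:\tau][\tau:\rho]$. The only cosmetic difference is that you cite the incidence identity directly from the simplicial case, whereas the paper phrases the same fact as the scalar being independent of $\mathscr{C}$ and specializing to the constant cosheaf $\underline{\mathbb{F}}_{K}$.
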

\begin{proof}{[With the way we have set up our notation, the proof becomes very similar to the proof in \cite{Nan25} for the sheaf cochain complex.]}

  We have defined an $\mathbb{N}$-indexed sequence of finite dimensional vector spaces with linear maps between them as required for a chain complex over the Abelian category of finite dimensional vector spaces. So we must check that the boundary maps satisfy $\partial^{2}=0$.

  To do this, we just have to check the identity on a single block of the composite matrix, say corresponding to $\mathscr{C}_{\tau}\to \mathscr{C}_{\tau''}$, for some $K_{k+1}\ni\tau\geq\tau''\in K_{k-1}$:

  \begin{align*}
    \partial^{\mathscr{C}}_{k}\circ \partial_{k+1}^{\mathscr{C}}
    &=\sum_{\tau' \in K_{k}} \partial^{\mathscr{C}}_{k-1}\big|_{\tau'\geq \tau''}\circ \partial^{\mathscr{C}}_{k}\big|_{\tau\geq \tau'} \\
    &=\sum_{\tau\geq \tau'\geq \tau''}[\tau:\tau']\cdot[\tau':\tau'']\cdot \mathscr{C}_{\tau'\geq \tau''}\circ \mathscr{C}_{\tau\geq \tau'} \\
    &=\sum_{\tau\geq \tau'\geq \tau''}[\tau:\tau']\cdot[\tau':\tau'']\cdot\mathscr{C}_{\tau\geq \tau''}\tag{$\ast$}\label{eq:1}\\
    &=\left[ \sum_{\tau\geq \tau'\geq \tau''} [\tau:\tau']\cdot[\tau:\tau''] \right]\cdot\mathscr{C}_{\tau\geq \tau''}
  \end{align*} where in the second equality, we re-index as to ignore all zero terms of the sum.

  Observe that the scalar term in the large brackets is independent of the specific cosheaf $\mathscr{C}$. Choosing the constant cosheaf $\underline{\mathbb{F}}_{K}$ recovers the standard simplicial chain complex $C_{\bullet}(K;\mathbb{F})$, for which the boundary map squares to zero: $$\left[ \sum_{\tau\geq \tau'\geq \tau''} [\tau:\tau']\cdot[\tau:\tau''] \right]\cdot \mathrm{id}_{\mathbb{F}}=\partial^{K}_{k}\circ \partial^{K}_{k+1}=0.$$Since this holds universally, the composition $\partial^{\mathscr{C}}_{k}\circ \partial_{k+1}^{\mathscr{C}}=0$ for any cosheaf $\mathscr{C}$ confirming that $(C_{\bullet}(K;\mathscr{C}),\partial^{\mathscr{C}}_{\bullet})$ does indeed form a chain complex.
\end{proof}

\begin{df}

  Given a cosheaf $\mathscr{C}:(K,\leq)^{\mathrm{op}}\to \mathsf{Vect}^{\mathrm{fd}}_{\mathbb{F}}$ on $K$, we define the \emph{cosheaf homology groups} $$H_{k}(K;{\mathscr{C}}):=\frac{\text{Ker }\partial^{\mathscr{C}}_{k}}{\text{Im } \partial^{\mathscr{C}}_{k+1}}$$This is well-defined as $\partial^{\mathscr{C}}_{k}$ is just a map of vector spaces.
\end{df}

\begin{rem}
  The chain complexes defined above naturally live in the (Abelian) chain complex category of finite dimensional vector spaces, denoted $\mathsf{Ch}_{\bullet}(\mathsf{Vect}^{\mathrm{fd}}_{\mathbb{F}})$, which has morphisms called \emph{chain maps} $\varphi_{\bullet}:C_{\bullet}\to D_{\bullet}$. These chain maps are defined as a collection of maps making the following diagram commute:

  \[
    \begin{tikzcd}[scale=1, column sep=9mm, row sep=7mm]
      \cdots \ar[r, "\partial_{k+1}"]  & C_{k} \ar[d, "\varphi_{k}"]  \ar[r, "\partial_{k}"] & C_{k-1} \ar[d, "\varphi_{k-1}"] \ar[r, "\partial_{k-1}"] & \cdots \\
      \cdots \ar[r, "\partial_{k+1}"]  & D_{k} \ar[r, "\partial_{k}"] & D_{k-1} \ar[r, "\partial_{k-1}"] & \cdots
    \end{tikzcd}
  \]

  With this perspective, the homology defined above is a special case of the more general homology functor $H_{k}:\mathsf{Ch}_{\bullet}(\mathcal{A})\to \mathcal{A}$ defined for chain complexes on an arbitrary Abelian category $\mathcal{A}$. This functor is defined on objects in the same way as in the above definition and on morphisms by
  \begin{align*}
    H_{k}\left( C_{\bullet}\overset{\varphi_{\bullet}}{\longrightarrow} D_{\bullet} \right)&:H_{k}C_{\bullet}\to H_{k}D_{\bullet}, \\
    [z]&\mapsto[\varphi_{k}z].
  \end{align*}(This is shown to be well-defined by showing that chain maps send boundaries to boundaries and cycles to cycles.)
\end{rem}

\begin{ex}
  In the field of \emph{topological data analysis}, it can be relevant to study the homology of fibres of simplicial maps $F:K\to L$. This data is coherently (retaining the homology data for the fibre inclusion maps) organised into a sheaf. A detailed motivation for this and definition of the sheaf is found in \cite{Nan25}. Dually, we use a cosheaf to organise the relevant data for fibre cohomology.

  Explicitly, given a simplicial map $f:K\to L$, we first define, for $\tau \in L$, $$\tau/f:=\{\sigma \in K \mid f(\sigma)\leq \tau\}.$$Then the $k^{\text{th}}$ \emph{fibre cohomology cosheaf of} $f$ is defined by
  \begin{align*}\mathscr{C}_{\tau}&:=H^{k}(\tau/f),  \\ \mathscr{C}_{\tau\geq\tau'}&:=H^{k}(\tau/f)\to H^{k}(\tau'/f),
  \end{align*}where $H^{k}(-):\mathsf{SimpCx}\to \mathsf{Vect}$ denotes the standard simplicial homology functor (with coefficients in $\mathbb{F}$) and the extension map is the map induced by the inclusion $\tau'/f\hookrightarrow\tau/f$.
\end{ex}

\section{Different notions of (co)sheaves}\label{section:2}

Readers familiar with the notion of (co)sheaves in other contexts may feel uncomfortable with the definitions we gave in the previous section. This section aims to reconcile these definitions with the more familiar definition of a (co)sheaf on a topological space.

\begin{df}
  A \emph{sheaf} (resp. \emph{cosheaf}) \emph{on a topological space} $X$ valued in a category $\mathcal{D}$ is defined as a covariant (contravariant) functor from the open set category of $X$ to the category $\mathcal{D}$, subject to a certain \emph{gluing condition}:

  Let $X$ be a topological space. We say a functor $F:\mathsf{Open}(X)^{\mathrm{op}}\to\mathcal{D}$ satisfies the \emph{sheaf condition} if for any open set $U\in \mathsf{Open}(X)$ and open cover $\{U_{j} \}_{j\in J}$\footnote{Assumed to be non-repeating, i.e. $U_{j}=U_{k} \iff j=k$.} of $U$, the following diagram is an \emph{equaliser diagram}: $$F(U)\to\prod_{j}^{}F(U_{j})\rightrightarrows\prod_{x,y\in J}^{}F(U_{x}\cap U_{y}),$$where the first map is induced by the inclusion maps $U_{j}\hookrightarrow U$ with the universal property of the product, and the parallel maps are induced by the product of the relevant inclusions maps.

  Note that this condition only makes sense if $\mathcal{D}$ actually has the products involved in the statement of the condition. If we are in a position to state the sheaf condition, we can define the \emph{category of sheaves on $X$ taking values in $\mathcal{D}$}, $\mathsf{Shv}(X;\mathcal{D})$, as the full subcategory of objects of $\mathsf{Fun}(\mathsf{Open}(X)^{\mathrm{op}},\mathcal{D})$ satisfying the sheaf condition. We define the cosheaf condition and category of cosheaves dually (as a \emph{coequaliser diagram}).
\end{df}

So our definition of a (co)sheaf on a simplicial complex has three glaring problems:

\begin{enumerate}
  \item Our functors are not from a category of open sets of a topological space.
  \item Our sheaf definition isn't contravariant (and our cosheaf definition isn't covariant).
  \item There is no gluing condition.
\end{enumerate}

Surprisingly, all three problems can be addressed by taking the right perspective. We follow the work found in chapters two and four of \cite{Cur14} (which streamlined the theory developed in \cite{Lad08}), using adapted definitions and merging a few relevant results into one. This culminates in \Cref{thm:2.5}, which describes a general equivalence between (co)sheaves defined on posets and cosheaves defined on an associated topological space. We will first introduce some definitions.

\begin{df}
  Given a set $P$, we say that a category $\mathcal{D}$ is $|P|$\emph{-}(\emph{co})\emph{complete} if either $P$ is finite and $\mathcal{D}$ is finitely (co)complete or $P$ is infinite and $\mathcal{D}$ is (co)complete\footnote{This notation is introduced so that we can amend the statements of some results in \cite{Cur14} so that they apply to the case where we take a cosheaf on a (finite) simplicial complex with values in the (finitely (co)complete) category $\mathsf{Vect}^{\mathrm{fd}}_{\mathbb{F}}$.}.
\end{df}

\begin{df}
  A \emph{sheaf} (resp. \emph{cosheaf}) \emph{on a partially ordered set} $(P,\leq)$ taking values in a $|P|$-(co)complete category $\mathcal{D}$ is defined as a functor $(P,\leq)\to \mathcal{D}$ or $(P,\leq)^{\mathrm{op}}\to \mathcal{D}$ resp. We write $\mathsf{Shv}(P;\mathcal{D})$ and $\mathsf{CoShv}(P;\mathcal{D})$ for the respective functor categories.
\end{df}

\begin{df}
  Any poset $(P,\leq)$ can be equipped with a topology called the \emph{Alexandrov topology}, whose open sets are given by the subsets $U$ of $P$ satisfying the condition $$\text{if $x\leq y$ and $x\in U$, then $y\in U$.}$$We write $P_{\mathrm{Alex}}$ to refer to the poset $P$ as a topological space with the above topology.

  At any $p\in P$, define the \emph{open star} at $p$ as the set $\mathcal{U}_{p}:=\{x\in P \mid p\leq x\}$ and note the collections of open stars form a basis for $P_{\mathrm{Alex}}$. In the case when $P$ is a simplicial complex, the open star at a simplex coincides with the definition of the star of a simplex given in \cite{Nan25}.

  We also have a contravariant \emph{inclusion functor}, $$i:(P,\leq)\overset{}{\longrightarrow}\mathsf{Open}(P_{\mathrm{Alex}})^{\mathrm{op}},$$defined on objects by sending $p\in P$ to $\mathcal{U}_{p}$ and sends order relations between elements to inclusions of open stars.
\end{df}

This inclusion functor hints at how we can resolve problems $(1)$ and $(2)$ above; we will \emph{extend} our functor $(P,\leq)\to \mathcal{D}$ (resp. $(P,\leq)^{\mathrm{op}}\to \mathcal{D}$) to a functor from $\mathsf{Open}(P_{\mathrm{Alex}})^{\mathrm{op}}$ to $\mathcal{D}$ (resp. $\mathsf{Open}(P_{\mathrm{Alex}})$ to $\mathcal{D}$) such that
\[
  \begin{tikzcd}[scale=1, column sep=6mm, row sep=7mm]
    (P,\leq) \ar[r, ""] \ar[d, "i"']  & \mathcal{D} \\
    \mathsf{Open}(P_{\mathrm{Alex}})^{\mathrm{op}} \ar[ru, ""',dashed]
  \end{tikzcd}
\]
(Similarly for a functor $\mathsf{Open}(P_{\mathrm{Alex}})\to \mathcal{D}$, but with $i^{\mathrm{op}}$ instead of $i$)

We do this using the theory of \emph{Kan extensions} - which refers to a certain way of `universally extending' a functor (e.g. a (co)sheaf of a poset) along another functor (e.g. $i$ or $i^{\mathrm{op}}$).

There are actually two dual notions for the Kan extension of a functor $F:\mathcal{C}\to \mathcal{E}$ along $K:\mathcal{C}\to \mathcal{D}$. These are called the `left' and `right' Kan extensions, and are denoted $\mathrm{Lan}_{K}F$ and $\mathrm{Ran}_{K}F$ respectively. For readers wanting a detailed reference on Kan extensions, the author finds the treatment in chapter 6 of \cite{Rie16} adequate. The following result is needed to reach our goal.

\begin{thm}
  Given functors $F:\mathcal{C}\to \mathcal{E}$ and $K:\mathcal{C}\to \mathcal{D}$, where $\mathcal{C}$ is small and $\mathcal{D}$ is $|\mathrm{ob}\,\mathcal{C}|$-\emph{(}co\emph{)}complete, there is a formula for the left and right Kan extensions given on objects by
  \begin{align*}
    \mathrm{Lan_{K}F(d)=\mathrm{colim}}\Big((K\!\downarrow\!d)\overset{\Pi^{d}}{\longrightarrow}\mathcal{C}\overset{F}{\longrightarrow}\mathcal{E}\Big), && \mathrm{Ran_{K}F(d)=\mathrm{lim}}\Big((d\!\downarrow\!K)\overset{\Pi_{d}}{\longrightarrow}\mathcal{C}\overset{F}{\longrightarrow}\mathcal{E}\Big)
  \end{align*}where $(K\!\downarrow\!d)$ and $(d\!\downarrow\!K)$ denote the comma categories $(K\!\downarrow\!\mathrm{const}_{d})$ and $(\mathrm{const}_{d}\!\downarrow\!K)$ respectively, and the $\Pi$ functors are the canonical projections.
\end{thm}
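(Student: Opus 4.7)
The plan is to verify the universal property of the left Kan extension directly using the colimit formula, and then deduce the right Kan extension formula by a standard duality argument. Recall that $\mathrm{Lan}_{K} F$ is characterised by a functor $L \colon \mathcal{D} \to \mathcal{E}$ together with a unit $\eta \colon F \Rightarrow L \circ K$ which is initial among such pairs, so it suffices to exhibit the colimit construction as such a universal pair. (For this to be meaningful we need the colimit to land in a category with the requisite colimits, i.e.\ $\mathcal{E}$ must be $|\mathrm{ob}\,\mathcal{C}|$-cocomplete, which is the sense in which we read the hypothesis.)

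First I would verify well-definedness and functoriality. For each $d \in \mathcal{D}$ the comma category $(K\!\downarrow\!d)$ has at most $|\mathrm{ob}\,\mathcal{C}| \cdot |\mathrm{mor}\,\mathcal{D}|$ objects, so the colimit of $F \circ \Pi^{d}$ exists in $\mathcal{E}$ by assumption. A morphism $\phi \colon d \to d'$ in $\mathcal{D}$ induces a postcomposition functor $\phi_{*} \colon (K\!\downarrow\!d) \to (K\!\downarrow\!d')$ satisfying $\Pi^{d'} \circ \phi_{*} = \Pi^{d}$, and the universal property of the colimit yields a canonical map $\mathrm{Lan}_{K}F(\phi) \colon \mathrm{Lan}_{K}F(d) \to \mathrm{Lan}_{K}F(d')$; functoriality in $\phi$ is immediate from uniqueness. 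The unit component $\eta_{c} \colon F(c) \to \mathrm{Lan}_{K}F(K(c))$ is defined as the colimit coprojection associated to the object $(c, \mathrm{id}_{K(c)}) \in (K\!\downarrow\!K(c))$, and naturality of $\eta$ in $c$ is a short check using again the uniqueness clause of the colimit.

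The heart of the argument is the universal property. Given $G \colon \mathcal{D} \to \mathcal{E}$ and $\alpha \colon F \Rightarrow G \circ K$, I would construct, for each $d$, a cocone on $F \circ \Pi^{d}$ with apex $G(d)$ whose leg at $(c, f \colon K(c) \to d)$ is the morphism $G(f) \circ \alpha_{c}$. Cocone compatibility along a morphism $g \colon (c, f) \to (c', f')$ in $(K\!\downarrow\!d)$, characterised by $f' \circ K(g) = f$, unfolds to the identity $G(f') \circ \alpha_{c'} \circ F(g) = G(f) \circ \alpha_{c}$, which follows from naturality of $\alpha$ combined with the comma-category relation and functoriality of $G$. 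The colimit's universal property then yields a unique morphism $\bar{\alpha}_{d} \colon \mathrm{Lan}_{K}F(d) \to G(d)$; evaluating at the coprojection indexed by $(c, \mathrm{id}_{K(c)})$ recovers the required identity $\bar{\alpha}_{K(c)} \circ \eta_{c} = \alpha_{c}$, and both naturality of $\bar{\alpha}$ in $d$ and uniqueness follow from the uniqueness clause of the colimit. The right Kan extension formula follows formally by applying what has just been shown to $F^{\mathrm{op}} \colon \mathcal{C}^{\mathrm{op}} \to \mathcal{E}^{\mathrm{op}}$ and $K^{\mathrm{op}} \colon \mathcal{C}^{\mathrm{op}} \to \mathcal{D}^{\mathrm{op}}$, together with the natural identifications $(K^{\mathrm{op}}\!\downarrow\!d) \cong (d\!\downarrow\!K)^{\mathrm{op}}$ and the fact that colimits in $\mathcal{E}^{\mathrm{op}}$ are limits in $\mathcal{E}$ (and reversing the indexing category does not change limits). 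The main obstacle is purely bookkeeping: keeping the comma-category relations, whiskering conventions and duality steps consistent — no deep categorical input is required beyond the universal property of colimits.
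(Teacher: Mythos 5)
Your proposal is correct and follows the standard pointwise-Kan-extension argument (show the colimit formula carries a canonical unit and verify the universal property, then obtain the $\mathrm{Ran}$ formula by duality); the paper itself offers no proof of this theorem, deferring to chapter 6 of \cite{Rie16}, where essentially this same argument appears. Your reading of the hypothesis --- that it is the target category $\mathcal{E}$ which must admit the $|\mathrm{ob}\,\mathcal{C}|$-indexed (co)limits, rather than $\mathcal{D}$ as literally written --- is the correct repair of the statement, and the only blemish is the parenthetical ``reversing the indexing category does not change limits,'' which is unnecessary (and false if read literally): the two opposites, one from $(K^{\mathrm{op}}\!\downarrow\!d)\cong(d\!\downarrow\!K)^{\mathrm{op}}$ and one from converting a colimit in $\mathcal{E}^{\mathrm{op}}$ into a limit in $\mathcal{E}$, simply cancel to give the limit over $(d\!\downarrow\!K)$ directly.
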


\begin{rem}
  These formulae determine the action of the Kan extensions on morphisms by the universal property of colimits.
\end{rem}

Now we are ready to state the main result that relates the definition of (co)sheaves on posets to (co)sheaves on topological spaces and justifies why we needed to define the Alexandrov topology.

\begin{thm}\label{thm:2.5}
  Let $(P,\leq)$ be a poset. Let $\mathcal{D}$ be $|P|$-\emph{(}co\emph{)}complete. Then there are equivalences of categories
  \begin{align*}
    \mathsf{Shv}(P;\mathcal{D})\cong\mathsf{Shv}(P_{\mathrm{Alex}};\mathcal{D}),  && \mathsf{CoShv}(P;\mathcal{D})\cong\mathsf{CoShv}(P_{\mathrm{Alex}};\mathcal{D}),
  \end{align*}given by taking right and left Kan extensions respectively.
\end{thm}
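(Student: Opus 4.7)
The plan is to exhibit the adjoint to each Kan extension as the restriction along $i$ (resp.\ $i^{\mathrm{op}}$) and upgrade the resulting adjunction to an equivalence by verifying that its unit and counit are natural isomorphisms on the (co)sheaf subcategories. General Kan extension theory supplies the adjunctions $i^{*} \dashv \mathrm{Ran}_{i}$ on the sheaf side and $\mathrm{Lan}_{i^{\mathrm{op}}} \dashv (i^{\mathrm{op}})^{*}$ on the cosheaf side, where $i^{*}H := H \circ i$ denotes restriction along $i$. I would focus on the sheaf case, the cosheaf case being formally dual.

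First I would simplify the comma categories appearing in the pointwise Kan extension formula. An object of $(U \!\downarrow\! i)$ is a pair $(p, \mathcal{U}_{p} \hookrightarrow U)$, i.e.\ simply an element $p \in U$ (since an inclusion $\mathcal{U}_{p} \subseteq U$ exists iff $p \in U$, using up-closedness of $U$); morphisms reduce to the order relation on $U$ viewed as a subposet of $P$. Hence $\mathrm{Ran}_{i} F(U) \cong \lim_{p \in U} F(p)$, and dually $\mathrm{Lan}_{i^{\mathrm{op}}} G(U) \cong \mathrm{colim}_{p \in U} G(p)$. At $U = \mathcal{U}_{p}$ the element $p$ is the minimum of $U$, so the (co)limit collapses to $F(p)$ (resp.\ $G(p)$); this immediately delivers the unit isomorphism $F \xrightarrow{\sim} i^{*} \mathrm{Ran}_{i} F$, and dually the counit iso on the cosheaf side.

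Next I would verify that $\mathrm{Ran}_{i} F$ satisfies the sheaf condition. Given an open cover $\{U_{j}\}$ of $U$, a cone with apex $X$ over the equaliser diagram amounts to a family of cones $\{(X \to F(p))_{p \in U_{j}}\}_{j}$ agreeing on pairwise intersections. The crucial input is up-closedness of Alexandrov opens: whenever $p \leq q$ in $U$ and $p \in U_{j}$, we also have $q \in U_{j}$, so compatibility of each individual cone on $U_{j}$ already captures every order relation in $U$ originating at $p$. Combined with intersection-agreement, which makes the assignment $p \mapsto (X \to F(p))$ independent of which $U_{j}$ is chosen to contain $p$, this assembles into a cone over the full diagram $\{F(p)\}_{p \in U}$, so the equaliser coincides with $\lim_{p \in U} F(p)$ as required.

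Finally I would check that the counit $\mathrm{Ran}_{i} i^{*} H \to H$ is an isomorphism when $H$ is a sheaf. Its component at $U$ is the comparison map $\lim_{p \in U} H(\mathcal{U}_{p}) \to H(U)$. Since $\{\mathcal{U}_{p}\}_{p \in U}$ is an open cover of $U$, and intersections $\mathcal{U}_{p} \cap \mathcal{U}_{q}$ are themselves up-closed and hence unions of open stars, the sheaf condition on $H$ applied to this cover reduces the Čech equaliser back to the limit over $U$. With both unit and counit natural isomorphisms, the sheaf equivalence follows; the cosheaf equivalence is entirely dual, with limits replaced by colimits and the sheaf condition by its coequaliser counterpart. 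The main obstacle I anticipate is the sheaf-condition verification: translating between the equaliser presentation of gluing and the plain limit $\lim_{p \in U} F(p)$ is exactly where the up-closedness of Alexandrov opens is genuinely required, and the cleanest route I see is the universal-property argument above.
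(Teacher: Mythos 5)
Your proposal is correct and takes essentially the same route as the paper: extend along $i$ via a pointwise Kan extension (simplifying the comma category to the subposet $U$, so open stars give back the original (co)stalks), check the gluing condition using up-closedness of Alexandrov opens, and invert by evaluating on open stars --- the paper just works out the dual cosheaf/colimit side and verifies arbitrary covers by refining through the open-star cover, while you package the inverse as the restriction adjoint and argue the (co)equaliser condition directly by comparing cones. The only nitpick is that your ``unit''/``counit'' labels for the adjunction $i^{*}\dashv\mathrm{Ran}_{i}$ are swapped, which does not affect the argument.
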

\begin{proof}(\emph{some details omitted for brevity})
  We will focus on the equivalence of categories on the right as this is most relevant in the rest of this essay. The left equivalence then follows by a dual argument.\\[12pt]
  Given any cosheaf $\mathscr{C}:(P,\leq)^{\mathrm{op}}\to \mathcal{D}$ on $P$, the left Kan extension, $$G:=\mathrm{Lan}_{i}\mathscr{C}:\mathsf{Open}(P_{\mathrm{Alex}})\to \mathcal{D,}$$(where we are now writing $i$ for $i^{\mathrm{op}}:(P,\leq)^{\mathrm{op}}\to\mathsf{Open}(P_{\mathrm{Alex}})$) gives us a functor of the right form to be a cosheaf. We now must check the cosheaf condition.

  To do this, we first fix an arbitrary open set $U\in P_{\mathrm{Alex}}$, and consider the open cover $\{\mathcal{U}_{p}\}_{p\in P_{\mathrm{Alex}}}=:\mathcal{\underline{U}}$. The cosheaf condition is satisfied if the following diagram is a coequaliser:
  \[
    \begin{tikzcd}[scale=1, column sep=9mm, row sep=7mm]
      \coprod\limits_{x,y\in U}G(\mathcal{U}_{x}\cap \mathcal{U}_{y}) \ar[r, "", shift left] \ar[r, ""', shift right]  & \coprod\limits_{p\in U}G(\mathcal{U}_{p}) \ar[r, ""] & G(U).
    \end{tikzcd}
  \]

  The formula for the left Kan extension gives $$G(U):={\text{colim }}\Big( (i\!\downarrow\!U)\overset{\Pi^{{U}}}{\longrightarrow}(P,\leq)^{\mathrm{op}}\overset{\mathscr{C}}{\longrightarrow}\mathcal{\mathcal{D}}\Big).$$The definition of the comma category here simplifies greatly; it is equivalent to a category with objects given as all the $p\in U$, and a unique morphism $p\to q$ whenever $p\geq q$ in $P$. Under this equivalence, we see that $G(U)$ is given by the colimit over the diagram in $\mathcal{D}$ consisting of objects $\mathscr{C}_{p}$ for each $p\in U$ and a unique morphism $\mathscr{C}_{p\geq q}:\mathscr{C}_{p}\to \mathscr{C}_{q}$ whenever $p\geq q$. Unwinding the definitions for the coproducts allows us to see that this diagram is in fact a coequaliser.

  Next we consider an arbitrary open cover, $\{U_{j}\}_{j\in J}=:\underline{U}$, of the open set $U$, and aim to show

  \[
    \begin{tikzcd}[scale=1, column sep=9mm, row sep=7mm]
      \coprod\limits_{x,y\in J}G(U_{x}\cap U_{y}) \ar[r, "", shift left] \ar[r, ""', shift right]  & \coprod\limits_{j\in J}G(U_{j}) \ar[r, ""] & G(U),
    \end{tikzcd}
  \]
  is always a coequaliser diagram. The crucial observation here is that $\mathcal{\underline{U}}$ always refines\footnote{We say $\{V_{\beta}\}_{\beta\in B}$ \emph{refines} $\{W_{\alpha}\}_{\alpha\in A}$ if for any $\beta\in B$ there exists $\alpha\in A$ such that $V_{\beta}\subseteq W_{\alpha}$} another open cover of $U$ as $\mathcal{U}_{p}$ always includes into the $U_{j}$ containing the point $p$. This refinement property can be used to construct morphisms:
  \[
    \begin{tikzcd}[scale=1, column sep=9mm, row sep=2.5mm]
      \coprod\limits_{x,y\in J}G(U_{x}\cap U_{y}) \ar[r, "", shift left] \ar[r, ""', shift right]  & \coprod\limits_{j\in J}G(U_{j}) \ar[rd, "",start anchor={[xshift=-0.5ex,yshift=1ex]}]
      \\
      && G(U).
      \\
      \coprod\limits_{x,y\in U}G(\mathcal{U}_{x}\cap \mathcal{U}_{y}) \ar[uu, "",dashed,yshift=1ex]  \ar[r, "", shift left] \ar[r, ""', shift right]  & \coprod\limits_{p\in U}G(\mathcal{U}_{p}) \ar[ru, "",end anchor={[xshift=0ex,yshift=0.25ex]}, start anchor={[xshift=-0.5ex]}] \ar[uu, "",dashed,yshift=1ex]
    \end{tikzcd}
  \]
  Using the fact that the lower diagram is a coequaliser as well as some universal properties, a diagram chase reveals that $G(U)$ with its canonical morphism is a cocone of the top coequaliser diagram. Hence the universal property of the coequaliser for the cosheaf condition for $\underline{U}$ gives
  \[
    \begin{tikzcd}[scale=1, column sep=9mm, row sep=6mm]
      \coprod\limits_{x,y\in J}G(U_{x}\cap U_{y}) \ar[r, "", shift left] \ar[r, ""', shift right]  & \coprod\limits_{j\in J}G(U_{j}) \ar[rdd, "",start anchor={[xshift=-1.5ex,yshift=1.5ex]}] \ar[r, ""]  & \mathrm{Coeq}(\underline{U}) \ar[dd, "\exists!", dashed]
      \\
      \\
      \coprod\limits_{x,y\in U}G(\mathcal{U}_{x}\cap \mathcal{U}_{y}) \ar[uu, "",yshift=1ex]  \ar[r, "", shift left] \ar[r, ""', shift right]  & \coprod\limits_{p\in U}G(\mathcal{U}_{p}) \ar[r, ""] \ar[uu, "",yshift=1ex] & G(U)
    \end{tikzcd}
  \]
  Finally, commutativity of the diagram, gives that $\mathrm{Coeq}(\underline{U})$ is a cocone over the bottom coequaliser diagram and hence there is a unique $G(U)\to \mathrm{Coeq}(\underline{U})$ implying the previous unique morphism was an isomorphism and so $\underline{U}$ satisfies the cosheaf condition. We have now shown that $G:=\mathrm{Lan}_{i}\mathscr{C}$ satisfies the cosheaf condition for all open sets $U$ with arbitrary open cover $\{U_{j}\}_{j\in J}$ so taking left Kan extensions does indeed give a functor $\mathsf{CoShv}(P;\mathcal{D})\to \mathsf{CoShv}(P_{\mathrm{Alex}};\mathcal{D})$.

  For the functor in the other direction; given a cosheaf $G:\mathsf{Open}(P_{\mathrm{Alex}})\to \mathcal{D}$, we define a functor $\mathscr{C}:(P,\leq)^{\mathrm{op}}\to \mathcal{D}$ by the following cofiltered limit: $$\mathscr{C}(p):=\varprojlim_{U\ni p}\;G(U)= G(\mathcal{U}_{p}),$$where the equality at the end is due to $\mathcal{U}_{p}$ being the smallest neighbourhood containing $p$. This functor in the other direction which forms an equivalence of the categories.
\end{proof}

\begin{cor}\label{corollary:2.6}
  The previous theorem applies to our definition of (co)sheaves on a simplicial complex. So we have an equivalence of categories: $$\mathsf{Shv}(K):=\mathsf{Fun}((K,\leq),\mathsf{Vect}^{\mathrm{fd}}_{\mathbb{F}})\cong\mathsf{Shv}(P_{\mathrm{Alex}};\mathsf{Vect}^{\mathrm{fd}}_{\mathbb{F}}),$$
\end{cor}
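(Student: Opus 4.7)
The plan is to apply \Cref{thm:2.5} directly by verifying its two hypotheses in the setting of a finite simplicial complex $K$ with target category $\mathsf{Vect}^{\mathrm{fd}}_{\mathbb{F}}$. The first hypothesis is that the source is a poset, which is immediate: $(K,\leq)$ under the face relation is a small poset, so it qualifies as input to the theorem. The second hypothesis requires $\mathsf{Vect}^{\mathrm{fd}}_{\mathbb{F}}$ to be $|K|$-(co)complete, and since $K$ is finite by our standing assumption, this reduces to showing that $\mathsf{Vect}^{\mathrm{fd}}_{\mathbb{F}}$ is finitely bicomplete.

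For the second step, I would observe that $\mathsf{Vect}^{\mathrm{fd}}_{\mathbb{F}}$ has all finite products and coproducts, both given by finite direct sums, which manifestly preserve finite-dimensionality. It also has equalisers and coequalisers, computed as kernels and cokernels of linear maps between finite-dimensional spaces, which remain finite-dimensional. Since a category with finite (co)products and (co)equalisers has all finite (co)limits, this establishes finite bicompleteness. With both hypotheses confirmed, \Cref{thm:2.5} immediately furnishes the equivalence $\mathsf{Shv}(K) \cong \mathsf{Shv}(K_{\mathrm{Alex}};\mathsf{Vect}^{\mathrm{fd}}_{\mathbb{F}})$ by right Kan extension along $i$, and the dual argument provides the analogous equivalence $\mathsf{CoShv}(K) \cong \mathsf{CoShv}(K_{\mathrm{Alex}};\mathsf{Vect}^{\mathrm{fd}}_{\mathbb{F}})$ by left Kan extension along $i^{\mathrm{op}}$.

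The only real subtlety — and the reason for introducing the $|P|$-(co)complete notation in the first place — is that $\mathsf{Vect}^{\mathrm{fd}}_{\mathbb{F}}$ is \emph{not} (co)complete in general: an infinite direct sum of nonzero finite-dimensional spaces leaves the category. The finiteness of $K$ is precisely what rescues the argument, ensuring all comma categories appearing in the Kan extension formula are finite and hence admit (co)limits in $\mathsf{Vect}^{\mathrm{fd}}_{\mathbb{F}}$. Thus the proof is essentially a bookkeeping check, but it justifies after the fact why the three ``glaring problems'' raised at the start of \Cref{section:2} are really non-issues for the definitions adopted in \Cref{section:1}.
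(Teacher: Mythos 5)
Your proposal is correct and follows essentially the same route as the paper: observe that $\mathsf{Vect}^{\mathrm{fd}}_{\mathbb{F}}$ is only \emph{finitely} (co)complete, and that the finiteness of $K$ ensures all (co)limits required by \Cref{thm:2.5} are indexed over finite diagrams, so the theorem applies. Your explicit check of finite bicompleteness via finite direct sums and (co)equalisers is a harmless elaboration of what the paper simply asserts.
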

\begin{proof}
  The subtlety here is that the category of \emph{finite} dimensional vector spaces (unlike $\mathsf{Vect}_{\mathbb{F}}$) is not actually complete and cocomplete, but rather \emph{finitely} complete and \emph{finitely} cocomplete. However, as our simplicial complexes are taken to be finite, the limits and colimits involved in the statement of the (co)sheaf condition are always going to be indexed over finite diagrams. Hence, \Cref{thm:2.5} applies.
\end{proof}

\vspace{\baselineskip}

\section{The Mayer-Vietoris sequence}\label{section:3}

In this section, we define how we can restrict cosheaves, allowing us to introduce the Mayer-Vietoris short exact sequence.

\begin{df}
  Given a subcomplex inclusion $L\hookrightarrow K$, we form a \emph{restriction functor} $$(-)|_{L}:\mathsf{CoShv}(K)\to \mathsf{CoShv}(L).$$It is given on objects $\mathscr{C}$ by the composition $$(L,\leq)^{\mathrm{op}}\overset{\iota}{\longrightarrow}(K,\leq)^{\mathrm{op}}\overset{\mathscr{C}}{\longrightarrow}\mathsf{Vect}^{\mathrm{fd}}_{\mathbb{F}}.$$On morphisms $f:\mathscr{C}_{1}\to\mathscr{C}_{2}$, restriction is given by the following horizontal composition of natural transformations:

  \[
    \begin{tikzcd}[column sep=huge]
      (L,\leq)^{\mathrm{op}}
      \ar[bend left=25]{r}[name=A,label=above:$\iota$, yshift=-1.5mm]{}
      \ar[bend right=25]{r}[name=B,label=below:$\iota$]{} &
      (K,\leq)^{\mathrm{op}}
      \ar[bend left=25]{r}[name=C,label=above:$\mathscr{C}_{1}$, yshift=-1.5mm]{}
      \ar[bend right=25]{r}[name=D,label=below:$\mathscr{C}_{2}$]{} &
      \mathsf{Vect}^{\mathrm{fd}}_{\mathbb{F}}
      \ar[shorten <=7pt,shorten >=3pt,Rightarrow,to path={(A) -- node[label=right:$\mathrm{id}_{}$, yshift=-0.75mm, xshift=-0.6mm] {} (B)}]{}
      \ar[shorten <=7pt,shorten >=3pt,Rightarrow,to path={(C) -- node[label=right:$f$, yshift=-0.7mm, xshift=-0.65mm] {} (D)}]{}
    \end{tikzcd}
  \]
\end{df}

\vspace{\baselineskip}

Now we are ready to state and prove the cosheaf homology version of the familiar \emph{Mayer-Vietoris sequence} from algebraic topology.

\begin{thm}\label{theorem:3.2} Let $L,M$ be subcomplexes of $K$ such that $K=L\cup M$ and set $I:=L\cap M$. If $\mathscr{C}$ is a cosheaf on $K$, then there is a short exact sequence of chain complexes: $$0\overset{}{\longrightarrow}C_{\bullet}(I;\mathscr{C}|_{I})\overset{p_{\bullet}}{\longrightarrow}C_{\bullet}(L;\mathscr{C}|_{L})\oplus C_{\bullet}(M;\mathscr{C}|_{M})\overset{q_{\bullet}}{\longrightarrow}C_{\bullet}(K;\mathscr{C})\overset{}{\longrightarrow}0.$$The middle term is given by the direct sum of chain complexes in the abelian category of chain complexes $\mathsf{Ch}_{\bullet}(\mathsf{Vect}^{\mathrm{fd}}_{\mathbb{F}})$.
\end{thm}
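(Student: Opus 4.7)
The plan is to mimic the classical Mayer--Vietoris construction from simplicial homology, exploiting the key closure property of subcomplexes under faces: if $\sigma \in I = L \cap M$ and $\tau \leq \sigma$, then both $L$ and $M$ being subcomplexes force $\tau \in I$ as well.

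First, I would define the maps in each degree $k$. Since $I_k \subseteq L_k \cap M_k \subseteq K_k$, there are canonical inclusions of direct summands $\iota_L : C_k(I;\mathscr{C}|_I) \hookrightarrow C_k(L;\mathscr{C}|_L)$ and $\iota_M : C_k(I;\mathscr{C}|_I) \hookrightarrow C_k(M;\mathscr{C}|_M)$, acting as the identity on each costalk $\mathscr{C}(\sigma)$ for $\sigma \in I_k$, together with analogous inclusions $j_L, j_M$ of $C_k(L;\mathscr{C}|_L)$ and $C_k(M;\mathscr{C}|_M)$ into $C_k(K;\mathscr{C})$. I would set $p_k := (\iota_L, -\iota_M)$ and $q_k := j_L + j_M$, so that $q_k \circ p_k = 0$ trivially.

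Next, I would verify that $p_\bullet$ and $q_\bullet$ are chain maps. Fix $\sigma \in L_k$ and a face $\tau \leq \sigma$ in $K_{k-1}$; the subcomplex property forces $\tau \in L_{k-1}$, so the block $\partial_k^{\mathscr{C}|_L}\big|_{\sigma\geq\tau} = [\sigma:\tau]\cdot\mathscr{C}(\sigma\geq\tau)$ agrees with $\partial_k^{\mathscr{C}}\big|_{\sigma\geq\tau}$, and no summands are dropped when passing from $L_{k-1}$ to $K_{k-1}$. Hence $j_L$ commutes with the boundaries; the argument is symmetric for $j_M$, and the identical reasoning handles $\iota_L, \iota_M$ via the closure observation from the first paragraph.

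Finally I would verify exactness. Injectivity of $p_k$ is immediate from the identity-on-summands description, and surjectivity of $q_k$ is exactly $K_k = L_k \cup M_k$. For exactness in the middle, decompose $K_k = (L_k \setminus I_k) \sqcup I_k \sqcup (M_k \setminus I_k)$ and suppose $q_k(y,z) = 0$; the components at $\sigma \in L_k \setminus I_k$ force $y_\sigma = 0$, symmetrically $z_\sigma = 0$ on $M_k \setminus I_k$, while on $I_k$ one has $z_\sigma = -y_\sigma$, so the element $x \in C_k(I;\mathscr{C}|_I)$ defined by $x_\sigma := y_\sigma$ satisfies $p_k(x) = (y,z)$. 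None of these steps is a serious obstacle; the conceptual heart of the argument is the subcomplex-closure observation that makes the restricted boundary operators compatible with the ambient one on $K$, after which the classical proof transfers essentially verbatim.
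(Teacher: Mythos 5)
Your proposal is correct and follows essentially the same route as the paper: levelwise inclusions of summands, the subcomplex-closure observation to verify compatibility with the boundary maps, and a componentwise check of exactness (the paper presents this last step as a block-matrix picture, which is the same argument). The only cosmetic difference is where you place the sign (on $p_\bullet$ rather than on $q_\bullet$ as in the paper), which changes nothing.
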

\begin{proof}
  We first claim that the inclusions $I\hookrightarrow L$ and $I\hookrightarrow M$ induce inclusions of chain complexes $$i_{\bullet}:C_{\bullet}(I;\mathscr{C}|_{I})\hookrightarrow C_{\bullet}(L;\mathscr{C}|_{L})\;\text{ and }\;j_{\bullet}:C_{\bullet}(I;\mathscr{C}|_{I})\hookrightarrow C_{\bullet}(M;\mathscr{C}|_{M}),$$
  which are defined levelwise via the following decomposition (shown for the left inclusion),
  \begin{align*}
    C_{k}(L;\mathscr{C}|_{L})&=\bigoplus_{\sigma \in L_{k}}\mathscr{C}|_{L}(\sigma)\\
    &=\underbrace{\Big[\bigoplus_{\sigma \in I_{k}}\mathscr{C}|_{L}(\sigma)\Big] }_{C_{k}(I;\mathscr{C}|_{I})}\oplus\Big[\bigoplus_{\sigma \in L_{k}\setminus I_{k}}\mathscr{C}|_{L}(\sigma)\Big]
  \end{align*}

  Next, we check the naturality condition that the inclusion maps $i_{k}$ and $i_{k-1}$ commute with the boundary operators. To see this, note that the restriction of the boundary operator of $C(L;\mathscr{C}|_{L})$ to the copy of $C_{k}(I;\mathscr{C}|_{I})$ in the decomposition above is given on each $\sigma \in I$ by $$\partial_{k}^{\mathscr{C}|_{L}}\big|_{\sigma}:=\sum_{\sigma\geq \tau \in L} [\sigma:\tau]\cdot \mathscr{C}|_{L}(\sigma\geq \tau)=\sum_{\sigma\geq \tau \in I} [\sigma:\tau]\cdot \mathscr{C}|_{I}(\sigma\geq \tau)=\partial_{k}^{\mathscr{C}|_{I}}\big|_{\sigma}$$where the second equality follows because $I$ is a simplicial complex and so contains all of its faces, and $\mathscr{C}|_{L}(\sigma\geq \tau)=\mathscr{C}|_{I}(\sigma\geq \tau)$ for $\sigma,\tau \in I$ by definition.
  The second inclusion $j_{\bullet}$ follows by an identical argument.

  The direct sum of chain complexes is given in the obvious way by: $$(C_{\bullet},\partial_{\bullet}^{C})\oplus(D_{\bullet},\partial_{\bullet}^{D})=(C_{\bullet}\oplus D_{\bullet},\partial_{\bullet}^{C}\oplus\partial_{\bullet}^{D}).$$

  Now we can define $p_{\bullet}=i_{\bullet}\oplus j_{\bullet}$, giving an injective chain map as required.

  To define the map $q_{\bullet}$, we first note that the inclusions $L\hookrightarrow K$ and $M\hookrightarrow K$ induce inclusions at the chain complex level $$i'_{\bullet}:C_{\bullet}(L;\mathscr{C}|_{L})\hookrightarrow C_{\bullet}(K;\mathscr{C})\;\text{ and }\;j'_{\bullet}:C_{\bullet}(M;\mathscr{C}|_{M})\hookrightarrow C_{\bullet}(K;\mathscr{C}),$$in the same way as before. Then, because $K=L\cup M$ implies every summand $\mathscr{C}_{\sigma}$ of $C(K;\mathscr{C})$ is also a summand of $C_{\bullet}(L;\mathscr{C}|_{L})$ or $C_{\bullet}(M;\mathscr{C}|_{M})$, we deduce that $$q_{\bullet}:=i'_{\bullet}\oplus -j'_{\bullet}:C_{\bullet}(L;\mathscr{C}|_{L})\oplus C_{\bullet}(M;\mathscr{C}|_{M})\overset{}{\longrightarrow} C_{\bullet}(K;\mathscr{C})$$is a surjective chain map.

  We now show exactness at the middle term. We present a matrix argument as the author believes it gives good intuition for the $q_{\bullet}$ map. Take $C_{k}(K;\mathscr{C})$ and $C_{\bullet}(L;\mathscr{C}|_{L})\oplus C_{\bullet}(M;\mathscr{C}|_{M})$ to have ordered bases with respect to decomposition $$K_{k}=(L_{k}\setminus M_{k})\cup(M_{k}\cap L_{k})\cup(M_{k}\setminus L_{k}).$$Then the matrices for $i'_{k}$ and $-j'_{k}$ with respect to this ordered basis for $K$ are given by

  \[
    i'_{k}=
    \begin{tikzpicture}[baseline=(m.center), node distance = 7mm,]
      \matrix (m) [matrix of nodes, nodes in empty cells,
        nodes={anchor=center, inner sep=4pt},
        row 1/.style={nodes={minimum height=1cm}},
        row 2/.style={nodes={minimum height=0.5cm}},
        column 1/.style={nodes={minimum width=1cm}},
        left delimiter={[},
        right delimiter={]}
      ]
      {
        \scalebox{1.2}{$\mathrm{id}$} \\
        \scalebox{1.2}0 \\
      };
      \draw[black,very thick] ([xshift=-1mm,yshift=0.5mm]m-1-1.south west) -- ([xshift=1mm,yshift=0.5mm]m-1-1.south east);

      \draw[decorate,
        decoration={calligraphic brace, amplitude=1mm,
          pre =moveto, pre  length=0.5mm,
          post=moveto, post length=0mm,
        raise=5mm},
        very thick,
      pen colour=black]   (m-2-1.north east) -- node[right=6mm] {$M_{k}\backslash L_{k}$}  (m-2-1.south east);
      \draw[decorate,
        decoration={calligraphic brace, amplitude=1.5mm,
          pre =moveto, pre  length=0mm,
          post=moveto, post length=1mm,
        raise=5mm},
        very thick,
      pen colour=black]   (m-1-1.north east) -- node[right=6mm] {$L_{k}$}  (m-1-1.south east);
    \end{tikzpicture}
    \text{and}
    \quad
    -j'_{k}=
    \begin{tikzpicture}[baseline=(m.center)]
      \matrix (m) [matrix of nodes, nodes in empty cells,
        nodes={anchor=center, inner sep=4pt},
        row 2/.style={nodes={minimum height=1cm}},
        row 1/.style={nodes={minimum height=0.5cm}},
        column 1/.style={nodes={minimum width=1cm}},
        left delimiter={[},
        right delimiter={]}
      ]
      {
        \scalebox{1.2}0 \\
        \scalebox{1.2}{$-\mathrm{id}$} \\
      };
      \draw[black,very thick] ([xshift=-1mm,yshift=-0.5mm]m-1-1.south west) -- ([xshift=1mm,yshift=-0.5mm]m-1-1.south east);

      \draw[decorate,
        decoration={calligraphic brace, amplitude=1.5mm,
          pre =moveto, pre  length=1mm,
          post=moveto, post length=0mm,
      raise=4.5mm}, very thick]
      (m-2-1.north east) -- node[right=6mm] {$M_{k}$}  (m-2-1.south east);
      \draw[decorate,
        decoration={calligraphic brace, amplitude=1mm,
          pre =moveto, pre  length=0mm,
          post=moveto, post length=0.5mm,
      raise=5mm}, very thick]
      (m-1-1.north east) -- node[right=6mm] {$L_{k}\backslash M_{k}$}  (m-1-1.south east);
    \end{tikzpicture}
  \]
  Hence $q_{k}$ is given by the block matrix:
  \[
    q_{k}=
    \begin{tikzpicture}[baseline=(m.center)]
      \matrix (m) [matrix of nodes, nodes in empty cells,
        nodes={anchor=center, inner sep=0pt},
        row 1/.style={nodes={minimum height=1cm}},
        row 2/.style={nodes={minimum height=0.5cm}},
        row 3/.style={nodes={minimum height=1cm}},
        column 1/.style={nodes={minimum width=1cm}},
        column 2/.style={nodes={minimum width=0.5cm}},
        column 3/.style={nodes={minimum width=1cm}},
        left delimiter={[},
        right delimiter={]}
      ]
      {
        \scalebox{1.2}{$\mathrm{id}$} && \scalebox{1.2}{0}\\
        & \scalebox{1.2}{0} & \\
        \scalebox{1.2}{0} && \scalebox{1.2}{$\mathrm{-id}$} \\
      };
      \draw[black,very thick] ([xshift=-2mm]m-2-1.south west) -- (m-2-2.south east);
      \draw[black,very thick] ([yshift=1mm]m-1-2.north east) -- (m-2-2.south east);
      \draw[black,very thick] (m-2-2.north west) -- ([xshift=2mm]m-2-3.north east);
      \draw[black,very thick] (m-2-2.north west) -- ([yshift=-1mm]m-3-2.south west);
      \draw[decorate,
        decoration={calligraphic brace, amplitude=1mm,
          pre =moveto, pre  length=0mm,
          post=moveto, post length=1mm,
      raise=5mm}, very thick]
      (m-1-3.north east) -- node[right=6mm,yshift=1mm] {$L_{k}\backslash M_{k}$}  (m-1-3.south east);
      \draw[decorate,
        decoration={calligraphic brace, amplitude=0.75mm,
          pre =moveto, pre  length=0mm,
          post=moveto, post length=0mm,
      raise=5mm}, very thick]
      (m-2-3.north east) -- node[right=6mm] {$L_{k}\cap M_{k}$}  (m-2-3.south east);
      \draw[decorate,
        decoration={calligraphic brace, amplitude=1mm,
          pre =moveto, pre  length=1mm,
          post=moveto, post length=0mm,
      raise=5mm}, very thick]
      (m-3-3.north east) -- node[right=6mm,yshift=-1mm] {$M_{k}\backslash L_{k}$}  (m-3-3.south east);
    \end{tikzpicture}
  \]
  This allows us to see that $\text{Ker }q_{\bullet}=\text{Im }p_{\bullet}$ and so the sequence is exact.
\end{proof}

\begin{cor}
  We have a long exact sequence of Homology groups (vector spaces in this case):
  $$\cdots\to H_{k}(I;\mathscr{C}|_{I})\to H_{k}(L;\mathscr{C}|_{L})\oplus H_{k}(M;\mathscr{C}|_{M})\to H_{k}(K;\mathscr{C})\to H_{k-1}(I;\mathscr{C}|_{I})\to\cdots$$
\end{cor}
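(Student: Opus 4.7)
The plan is to invoke the standard homological algebra principle that a short exact sequence of chain complexes in an abelian category induces a long exact sequence on homology, and to apply this to the Mayer-Vietoris sequence established in \Cref{theorem:3.2}. Since that sequence lives in $\mathsf{Ch}_{\bullet}(\mathsf{Vect}^{\mathrm{fd}}_{\mathbb{F}})$, which is abelian, the standard zig-zag (or snake) lemma applies verbatim, and the long exact sequence follows formally.

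Concretely, first I would note that for each degree $k$ the sequence from \Cref{theorem:3.2} restricts to a short exact sequence of finite-dimensional vector spaces
$$0\to C_{k}(I;\mathscr{C}|_{I})\xrightarrow{p_{k}}C_{k}(L;\mathscr{C}|_{L})\oplus C_{k}(M;\mathscr{C}|_{M})\xrightarrow{q_{k}}C_{k}(K;\mathscr{C})\to 0,$$
and that $p_{\bullet}, q_{\bullet}$ are chain maps by construction. Next, I would construct the connecting homomorphism $\delta_{k}:H_{k}(K;\mathscr{C})\to H_{k-1}(I;\mathscr{C}|_{I})$ by the usual diagram chase: given $[z]\in H_{k}(K;\mathscr{C})$, surjectivity of $q_{k}$ lifts $z$ to some $(\ell,m)$ in the middle complex; applying $\partial_{k}$ yields an element whose image under $q_{k-1}$ vanishes (by commutativity of the diagram and $\partial_{k}z=0$), so that it lies in $\ker q_{k-1}=\mathrm{Im}\,p_{k-1}$; injectivity of $p_{k-1}$ then produces a unique preimage in $C_{k-1}(I;\mathscr{C}|_{I})$, whose homology class is defined to be $\delta_{k}[z]$. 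A short verification using the chain map property shows independence of the choice of lift and of the representative $z$.

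Third, I would verify exactness at each of the three classes of positions in the resulting long sequence — at $H_{k}(I;\mathscr{C}|_{I})$, at $H_{k}(L;\mathscr{C}|_{L})\oplus H_{k}(M;\mathscr{C}|_{M})$, and at $H_{k}(K;\mathscr{C})$ — by the standard diagram chases on the exact ladder of chain groups. The inclusions $\mathrm{Im}\subseteq\ker$ in each case are immediate from $q_{\bullet}p_{\bullet}=0$ and the construction of $\delta_{\bullet}$, while the reverse inclusions follow from chasing a representative cycle through the diagram using injectivity of $p_{\bullet}$ and surjectivity of $q_{\bullet}$.

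The "hard part" of the corollary is not really a part of this proof at all: it was the construction of the short exact sequence in \Cref{theorem:3.2}. Given that, the corollary is a formal consequence, and the only genuine content left to supply is the standard snake-lemma argument, which can either be written out from scratch or cited directly as the zig-zag lemma applied to $(C_{\bullet}(I;\mathscr{C}|_{I}),\,C_{\bullet}(L;\mathscr{C}|_{L})\oplus C_{\bullet}(M;\mathscr{C}|_{M}),\,C_{\bullet}(K;\mathscr{C}))$ in $\mathsf{Ch}_{\bullet}(\mathsf{Vect}^{\mathrm{fd}}_{\mathbb{F}})$.
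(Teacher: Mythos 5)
Your proposal is correct and matches the paper's (implicit) argument exactly: the corollary is stated as an immediate consequence of \Cref{theorem:3.2}, obtained by applying the standard zig-zag/snake lemma to the short exact sequence of chain complexes in $\mathsf{Ch}_{\bullet}(\mathsf{Vect}^{\mathrm{fd}}_{\mathbb{F}})$. Your explicit construction of the connecting homomorphism and the exactness checks simply write out the standard argument the paper takes for granted.
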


\section{The Morse complex}\label{section:4}

Our goal in this section is to apply \emph{discrete Morse theory} to cosheaf homology. In particular we will introduce the \emph{Morse chain complex} and describe how it can simplify the computation of homology. First let us recall some definitions from \cite{Nan25}.

Recall that we write $\sigma\lhd\tau$ when $\sigma$ is a \emph{facet} of $\tau$, meaning $\sigma$ is a codimension one face of $\tau$; this is called the covering relation. A \emph{partial matching} on $K$ is a collection $\Sigma=\{(\sigma_{\bullet}\lhd\tau_{\bullet})\}$ of simplex-pairs such that if $(\sigma\lhd\tau)\in \Sigma$, then neither $\sigma$ nor $\tau$ appear in any other $\Sigma$-pair. (Note this definition can be given more generally for any finite poset). We call a simplex $\Sigma$-\emph{critical} if it is not contained in the partial matching data.

A $\Sigma$-\emph{path} in $K$ is a \emph{zig-zag} of simplices $$\rho=(\sigma_{1}\lhd\tau_{1}\rhd\sigma_{2}\lhd\tau_{2}\rhd\cdots\lhd\tau_{m-1}\rhd\sigma_{m}\lhd\tau_{m}),$$ such that $(\sigma_{i}\lhd\tau_{i})\in\Sigma$ for all $i$. We call a $\Sigma$-path \emph{gradient} if either $m=1$ or $\sigma_{1}$ is not a face of $\tau_{m}$. We say a partial matching $\Sigma$ is \emph{acyclic} if all of its paths are gradient.

\begin{df}
  Given a cosheaf $\mathscr{C}$ on $K$, we will say an acyclic partial matching $\Sigma$ is $\mathscr{C}$\emph{-compatible}
  if the extension maps $\mathscr{C}_{\tau\rhd\sigma}$ are vector space isomorphisms for all $(\sigma\lhd\tau)\in\Sigma$.

  Now fix a $\mathscr{C}$-compatible partial matching $\Sigma$. For any $\Sigma$-path $\rho$ as in the notation above, we define the $\mathscr{C}$-\emph{weight} $\omega_{\rho}^{\mathscr{C}}$ of $\rho$ as the linear map $$\omega_{\rho}^{\mathscr{C}}:=\pm\Big[\mathscr{C}^{-1}_{\tau_{m}\rhd\sigma_{m}}\circ \mathscr{C}_{\tau_{m-1}\rhd\sigma_{m}}\circ\cdots\circ \mathscr{C}^{-1}_{\tau_{2}\rhd\sigma_{2}}\circ\mathscr{C}_{\tau_{1}\rhd\sigma_{2}}\circ  \mathscr{C}^{-1}_{\tau_{1}\rhd\sigma_{1}}\Big],$$ where the $\pm$ symbol is determined by the parity of the formula: $$(-1)^{m}\cdot[\tau_{m}:\sigma_{m}]\cdot[\tau_{m-1}:\sigma_{m}]\cdots[\tau_{2}:\sigma_{2}]\cdot[\tau_{1}:\sigma_{2}]\cdot[\tau_{1}:\sigma_{1}].$$
\end{df}

\begin{thm}\label{thm:4.2}
  In the setup of the above definition, we can form the \emph{Morse chain complex}:
  \[
    \begin{tikzcd}[scale=1, column sep=9mm, row sep=7mm]
      \cdots \ar[r, "\partial^{\mathscr{C},\Sigma}_{k+2}"] & C^{\Sigma}_{k+1}(K;\mathscr{C}) \ar[r, "\partial^{\mathscr{C},\Sigma}_{k+1}"] & C^{\Sigma}_{k}(K;\mathscr{C}) \ar[r, "\partial^{\mathscr{C},\Sigma}_{k}"] & C^{\Sigma}_{k-1}(K;\mathscr{C}) \ar[r, "\partial^{\mathscr{C},\Sigma}_{k-1}"] & \cdots
    \end{tikzcd}
  \]
  which is quasi-isomorphic to $C_{\bullet}(K;\mathscr{C})$, and given in each dimension by $$C_{k}^{\Sigma}(K;\mathscr{C}):=\bigoplus_{\alpha}\mathscr{C}_{\alpha}\,,$$where the direct sum is taken over all $\Sigma$-critical simplices. The boundary maps $\partial_k^{\mathscr{C},\Sigma}$ are given by matrices whose block corresponding to the map $\mathscr{C}_{\alpha}\to\mathscr{C}_{\omega}$ \emph{(}for $\Sigma$-critical simplices $\alpha \in K_{k}$, $\omega \in K_{k-1}$\emph{)} is: $$[\alpha:\omega]_{\Sigma}:=[\alpha:\omega]\cdot\mathscr{C}_{\alpha\rhd\omega}+\sum_{\substack{\Sigma-\text{paths }\rho\\ (\sigma\lhd\cdots\lhd\tau)}}[\tau:\omega]\cdot\mathscr{C}_{\tau\rhd\omega}\circ \omega_{\rho}^{\mathscr{C}}\circ \mathscr{C}_{\alpha\rhd\sigma}\cdot[\alpha:\sigma].$$ We provide a picture for this block matrix similar to the one given in \Cref{definition:1.2.}:
  \[
    \partial^{\mathscr{C},\Sigma}_{k}=
    \begin{tikzpicture}[baseline=(M.center),]
      \matrix (M) [matrix of math nodes,
        left delimiter={[}, right delimiter={]},
        row sep=0em, column sep=1em,
        nodes={anchor=center}
      ]
      {
        {\raisebox{1mm}{$\ddots$}} & {\raisebox{1mm}{$\vdots$}} & {\raisebox{1mm}{$\iddots$}} \\
        \cdots & {[\alpha:\omega]_{\Sigma}} & \cdots \\
        {\raisebox{1mm}{$\iddots$}} & {\raisebox{1mm}{$\vdots$}} & {\raisebox{1mm}{$\ddots$}} \\
      };
      \node[above=0.5em of M-1-1] {$\cdots$};
      \node[above=0.5em of M-1-2] {$\mathscr{C}(\alpha)$};
      \node[above=0.5em of M-1-3] {$\cdots$};
      \node[left=1.2em of M-1-1]  {\raisebox{-3mm}{$\vdots$}};
      \node[left=0.5em of M-2-1]  {$\mathscr{C}(\omega)$};
      \node[left=1.2em of M-3-1]  {\raisebox{2.5mm}{$\vdots$}};

      \begin{scope}[on background layer]
        \node [fit=(M-1-2)(M-2-2)(M-3-2),
          fill=gray!25,
        inner sep=0pt] {};
        \node [fit={([shift={(-9.3mm,0mm)}]M-2-1.north east)(M-2-2)([shift={(2.4mm,0mm)}]M-2-3.north east)},
          fill=gray!25,
        inner sep=0pt] {};
        \node [fit=(M-2-2), fill=gray!50, inner sep=0pt] {};
        \node [fit=(M-2-2), draw, inner sep=0mm, line width=0.5mm] {};
      \end{scope}
    \end{tikzpicture}
  \]
\end{thm}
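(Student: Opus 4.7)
The plan is to follow the standard algebraic Morse theory approach (cf.\ Forman, Kozlov, Sk\"oldberg), adapted to cosheaf coefficients: I would construct an explicit chain-level deformation retract from $C_\bullet(K;\mathscr{C})$ onto the proposed Morse complex, from which the quasi-isomorphism is automatic. The key algebraic input is that $\mathscr{C}$-compatibility, together with acyclicity of $\Sigma$, lets us invert the part of $\partial^{\mathscr{C}}$ coming from matched pairs and thus perform a block Gaussian elimination.

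First, for each $k$ I partition $K_k = S_k \sqcup T_k \sqcup \mathrm{Cr}_k$, where $S_k$ (resp.\ $T_k$) is the set of simplices appearing as the lower $\sigma$ (resp.\ upper $\tau$) of a $\Sigma$-pair, and $\mathrm{Cr}_k$ is the set of $\Sigma$-critical simplices. This induces a direct-sum decomposition $C_k(K;\mathscr{C}) = S_k^{\mathscr{C}} \oplus T_k^{\mathscr{C}} \oplus M_k$ with $M_k = C_k^\Sigma(K;\mathscr{C})$, and $\partial_k^{\mathscr{C}}$ acquires a $3\times 3$ block form. I claim the block $\partial^{T \to S}_k \colon T_k^{\mathscr{C}} \to S_{k-1}^{\mathscr{C}}$ is an isomorphism: its diagonal entries are the matched contributions $\pm\mathscr{C}_{\tau \rhd \sigma}$, which are isomorphisms by $\mathscr{C}$-compatibility, while acyclicity of $\Sigma$ supplies an ordering of matched pairs relative to which the remaining (unmatched) contributions form a strictly upper-triangular nilpotent summand. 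Hence $\partial^{T \to S}_k$ is invertible, with inverse a finite Neumann series.

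I would then carry out the standard algebraic Morse reduction: using this invertibility, cancel $T^{\mathscr{C}}_\bullet$ against $S^{\mathscr{C}}_\bullet$ by block Gaussian elimination, producing an isomorphism of chain complexes $C_\bullet(K;\mathscr{C}) \cong C_\bullet^\Sigma(K;\mathscr{C}) \oplus A_\bullet$, where $A_\bullet$ is a direct sum of two-term contractible complexes. The induced differential on the $M$-summand is the Schur complement
\[
  \partial^{\mathscr{C},\Sigma} \;=\; \partial^{M \to M} \;-\; \partial^{T \to M}\circ (\partial^{T \to S})^{-1} \circ \partial^{S \to S}.
\]
Expanding $(\partial^{T \to S})^{-1}$ as a finite Neumann series and reorganising, each term is indexed by a $\Sigma$-path $(\sigma_1 \lhd \tau_1 \rhd \sigma_2 \lhd \cdots \lhd \tau_m)$; the composition of matched inverses and unmatched extensions produces precisely the weight $\omega_\rho^{\mathscr{C}}$, the product of incidence symbols along the zig-zag contributes the required sign, and combining with the flanking entries $\mathscr{C}_{\alpha \rhd \sigma_1}$, $\mathscr{C}_{\tau_m \rhd \omega}$ recovers the claimed formula for $[\alpha:\omega]_\Sigma$.

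The quasi-isomorphism is then immediate: the reduction furnishes explicit chain maps $\pi \colon C_\bullet(K;\mathscr{C}) \to C^{\Sigma}_\bullet(K;\mathscr{C})$ and $\iota$ in the reverse direction, together with a degree-$+1$ homotopy $h$ satisfying $\pi\iota = \mathrm{id}$ and $\iota\pi - \mathrm{id} = \partial h + h\partial$, so passage to homology is an isomorphism. The hard part will be the bookkeeping in the Neumann-series expansion: one must verify carefully that the signs contributed by the geometric series, the incidence symbols at each zig-zag turn, and the $(-1)^m$ prefactor in the definition of $\omega_\rho^{\mathscr{C}}$ all assemble cleanly into the stated formula, and that acyclicity really does cut the otherwise-infinite sum down to the finite one indexed by $\Sigma$-paths. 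Everything else reduces to linear algebra on a finitely-presented chain complex, using the decomposition of the first step and the invertibility of the matched block.
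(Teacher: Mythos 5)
Your proposal is correct in outline, but it takes a genuinely different route from the paper. The paper does not derive the Morse differential by elimination at all: it takes the path-sum formula as given, verifies $\partial^{\mathscr{C},\Sigma}\circ\partial^{\mathscr{C},\Sigma}=0$ by a direct block computation in the case of a single matched pair $\Sigma=\{\sigma\lhd\tau\}$ (adapting Proposition 8.8 of \cite{Nan25}, using the identity \eqref{eq:1} for the cosheaf boundary), and then adapts Proposition 8.10 of \cite{Nan25} for the chain homotopy equivalence --- so the quasi-isomorphism is obtained by cancelling one matched pair and iterating. You instead run the one-shot algebraic Morse reduction (Sk\"oldberg/J\"ollenbeck--Welker style): split off all matched simplices at once, invert the matched block using $\mathscr{C}$-compatibility together with the triangularity supplied by acyclicity, and read off the zig-zag formula from the Neumann-series expansion of the Schur complement. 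Your route buys more in one pass: the formula for $[\alpha:\omega]_\Sigma$, the fact that it squares to zero, and the explicit deformation retract all fall out of the single isomorphism $C_\bullet(K;\mathscr{C})\cong C^\Sigma_\bullet(K;\mathscr{C})\oplus A_\bullet$ with $A_\bullet$ contractible, whereas the paper's outline verifies $\partial^2=0$ separately and leans on the cited notes for the homotopy. The cost is precisely the bookkeeping you flag: nilpotence of the off-diagonal part needs the ordering of matched pairs induced by $\Sigma$-paths, and the alternating signs of the Neumann series must be reconciled with the $(-1)^m$ prefactor in $\omega^{\mathscr{C}}_\rho$ and the incidence symbols at each turn. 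One slip to fix: the Schur complement should read $\partial^{M\to M}-\partial^{T\to M}\circ(\partial^{T\to S})^{-1}\circ\partial^{M\to S}$; the rightmost factor must be the component of $\partial^{\mathscr{C}}$ from the critical summand into $S$ (your flanking map $\mathscr{C}_{\alpha\rhd\sigma_1}$), not $\partial^{S\to S}$, since otherwise the composite is not even a map out of the critical summand.
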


\begin{proof}[Proof Outline]
  The fact that we have formed a genuine chain complex follows in the same way as the proof of proposition 8.8 in \cite{Nan25}, except the contributions of the relevant sum have some additional linear map terms. Explicitly, for $\alpha,\omega$ critical simplices and $\Sigma=\{\sigma\lhd\tau\}$, the $\alpha,\omega$ block of the double boundary operator's matrix (`$B$') is given by the linear map
  \[
    B=\sum_{\xi\in K\backslash\{\sigma,\tau\}}[\xi:\omega]_{\Sigma}\circ[\alpha:\xi]_{\Sigma},
  \]
  which at each summand $\xi$ is equal to:
  \begin{equation*}
    \begin{split}
      B_{\xi}=&\left([\xi:\omega]\cdot\mathscr{C}_{\xi\rhd\omega} - [\tau:\omega]\cdot[\tau:\sigma]\cdot[\xi:\sigma]\cdot\mathscr{C}_{\tau\rhd\omega}\circ\mathscr{C}_{\tau\rhd\sigma}^{-1}\circ\mathscr{C}_{\xi\rhd\sigma}\right)\\
      &\quad\circ\left([\alpha:\xi]\cdot\mathscr{C}_{\alpha\rhd\xi} - [\tau:\xi]\cdot[\tau:\sigma]\cdot[\alpha:\sigma]\cdot\mathscr{C}_{\tau\rhd\xi}\circ\mathscr{C}_{\tau\rhd\sigma}^{-1}\circ\mathscr{C}_{\alpha\rhd\sigma}\right)\\
      =&\left([\xi:\omega]\cdot\mathscr{C}_{\xi\rhd\omega} - [\tau:\omega]\cdot[\tau:\sigma]\cdot[\xi:\sigma]\cdot\mathscr{C}_{\xi\geq\omega}\right)\\
      &\quad\circ\left([\alpha:\xi]\cdot\mathscr{C}_{\alpha\rhd\xi} - [\tau:\xi]\cdot[\tau:\sigma]\cdot[\alpha:\sigma]\cdot\mathscr{C}_{\alpha\geq\xi}\right)
    \end{split}
  \end{equation*}
  The proof continues in the same way (realising by considering dimensions that only three of the four terms after distributing over the composition remain), leading to:
  \begin{equation*}
    \begin{split}
      B = &\sum_{\xi}[\xi:\omega] \cdot [\alpha:\xi]\cdot {\overbrace{\mathscr{C}_{\xi\rhd\omega}\circ\mathscr{C}_{\alpha\rhd\xi}}^{\mathscr{C}_{\alpha\rhd\omega}}}\\
      &\quad - [\tau:\omega] \cdot  [\tau:\sigma] \cdot \sum_{\xi} [\xi:\sigma] \cdot [\alpha:\xi] \cdot \smash{\overbrace{\mathscr{C}_{\xi\geq\omega} \circ \mathscr{C}_{\alpha\geq\xi}}^{\mathscr{C}_{\alpha\rhd\omega}}}\\
      &\qquad- [\tau:\sigma] \cdot [\alpha:\sigma] \cdot \sum_{\xi} [\xi:\omega] \cdot [\tau:\xi] \cdot \smash{\overbrace{\mathscr{C}_{\xi\geq\omega} \circ
      \mathscr{C}_{\alpha\geq\xi}}^{\mathscr{C}_{\alpha\rhd\omega}}}\\
      =& - \mathscr{C}_{\alpha\rhd\omega}\cdot[\tau:\omega] \cdot [\alpha:\tau] - \mathscr{C}_{\alpha\rhd\omega} \cdot[\sigma:\omega] \cdot [\alpha:\sigma]\\[3px]
      &\quad + \mathscr{C}_{\alpha\rhd\omega} \cdot [\tau:\omega] \cdot [\alpha:\tau] \\[3px]
      &\qquad + \mathscr{C}_{\alpha\rhd\omega}\cdot [\sigma:\omega]\cdot[\alpha:\sigma]\\
      =&\,0
    \end{split}
  \end{equation*}
  where the second equality comes from the \Cref{eq:1} for the cosheaf complex boundary operator.

  The proof for the chain homotopy equivalence can be adapted from Proposition 8.10 of \cite{Nan25} in a similar manner, which implies the quasi-isomorphism.
\end{proof}

While the boundary maps may seem complicated, there are a couple useful things to keep in mind:
\begin{itemize}
  \item If we take our field to be $\mathbb{F}_{2}$, we can ignore all the incidence symbols, which removes much of the computation.
  \item An often useful, visual, and intuitive way to work out paths in the index of the sum is to draw a \emph{Hasse diagram} but with arrows between matched pairs inverted. This can often allow one to quite easily read off the zigzags between any two simplices. Unfortunately, this quickly becomes inefficient once the complex gets too large, but the author believes that drawing a Hasse diagram in small cases can be helpful for building intuition. See the example diagram below, taken from \cite{For02}:
\end{itemize}
\begin{center}
  \includegraphics[width=0.5\linewidth]{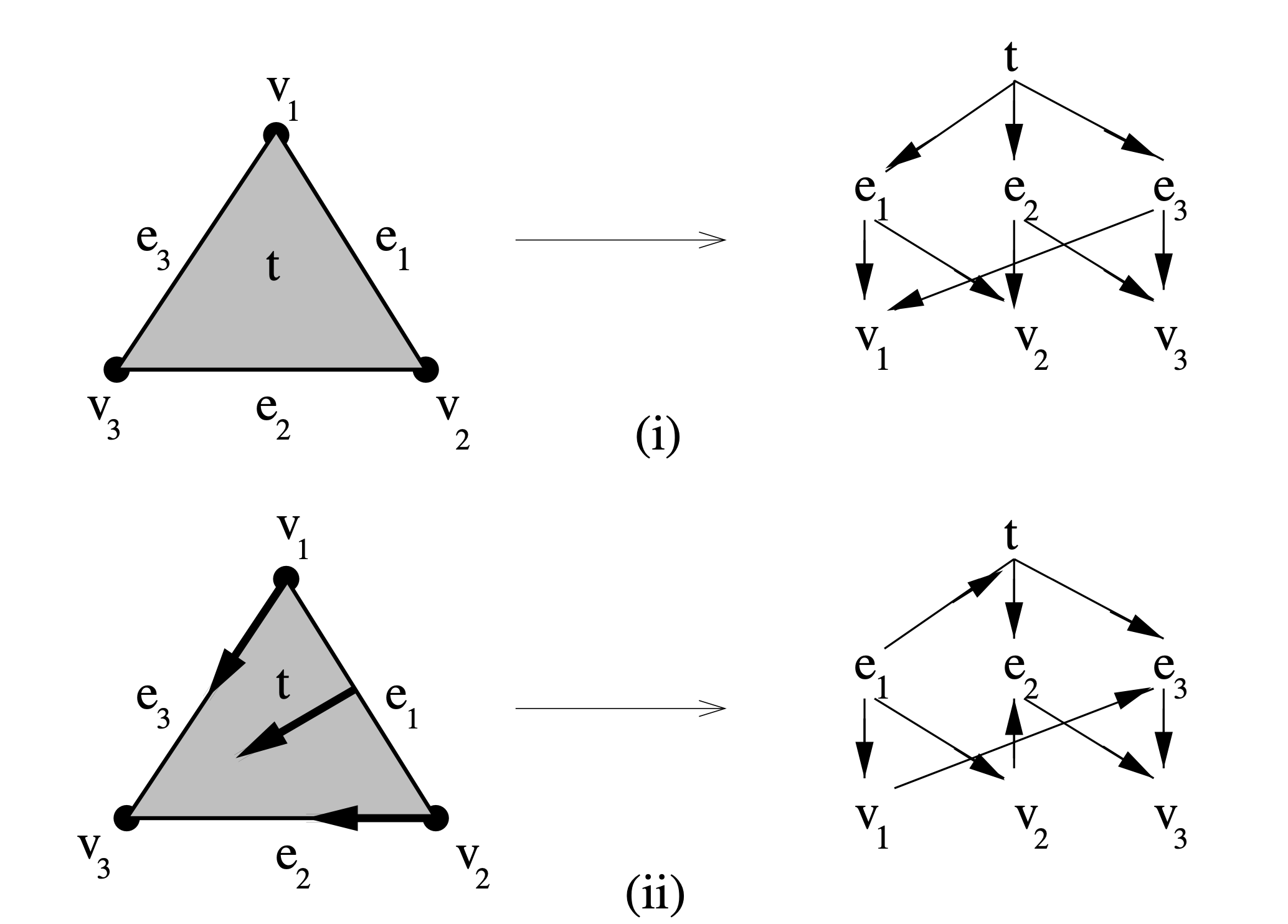}
\end{center}

\begin{rem}\phantomsection\label{rmk:scythe}
  Despite the points above, the Morse chain complex in any real application can not reasonably be computed manually. However, the Morse chain complex offers significant advantages for the \emph{algorithmic} computation of cosheaf homology. Computing cosheaf homology via the Morse complex requires three steps:
  \begin{itemize}
    \item Find a compatible partial matching.
    \item Compute the Morse boundary maps.
    \item Calculate the homology of the resulting complex using standard linear algebra.
  \end{itemize}
  While the final homology calculation needs no specific algorithm, the first two steps do. An algorithm performing these steps for cellular sheaves, aptly named `\texttt{Scythe}', was developed in \cite{CGN16}. This algorithm adapts to work for cellular cosheaves, which include the cosheaves on simplicial complexes we have focused on. The paper's main theorem  demonstrates that `\texttt{Scythe}' reduces the time complexity for these two steps from $O(n^{3}d^{3})$ to $O(np\tilde{m}d^{\omega})$ (here $n$ is the cell count, $d$ bounds the stalk dimension ($\text{sup}_{\sigma}\{\dim\mathscr{S}(\sigma)\}\leq d$), $\tilde{m}=\sum_{k\geq0}{\#\{K_{k}\}^{2}}$, $2\le\omega\le3$ is a constant depending on what matrix multiplication algorithm we use, and $p$ bounds the number of codimension one neighbours for any cell).
\end{rem}

\begin{asd}
  \cite{Nan25} relays a result for sheaf cohomology that is dual to \Cref{thm:4.2}. While this may not surprise the reader, it is not something which should be taken for granted; the general theory of sheaf cohomology for sheaves on topological spaces \emph{does not} formally dualise to obtain a dual theory of cosheaf homology. The reason for this lies in the fact that $\mathsf{CoShv}(X;\mathcal{A})$ does not admit a few important properties that $\mathsf{Shv}(X;\mathcal{A})$ does\footnote{If this seems surprising, it may be helpful to recall that categories are rarely equivalent to their opposite, and a functor category is equivalent to the corresponding contravariant functor category if and only if the source category is equivalent to it's opposite.}. To give some more detail; sheaf cohomology more generally is usually computed as the right derived functors of the (left exact) global sections functor $\Gamma$: $$H^{k}(X;\mathcal{F}):=R^{k}(\mathcal{F}):=H^{k}(\Gamma(X,\mathcal{I}^{\bullet})),$$ where $\mathcal{I}^{\bullet}$ is some acyclic resolution of $\mathcal{F}$. Attempting to dualise this definition by taking left derived functors of the cosections functor confronts us with the problem that $\mathsf{CoShv}(X;\mathcal{A})$ generally lacks enough projectives to construct the necessary resolutions. This technical obstruction may explain why historically, the development of cosheaf theory has been hindered. More recently, however, it has been shown (in \cite{Pra16}) that under mild hypotheses ($\mathcal{A}$ being locally presentable), $\mathsf{CoShv}(X;\mathcal{A})$ \emph{does} admit enough projectives. A less abstract proof applicable to the specific context of cosheaves on posets can be found in chapter 11 of \cite{Cur14}.
\end{asd}

\section{Categorification of discrete Morse theory}\label{section:5}

We will now discuss a more abstract perspective on discrete Morse theory (DMT), presented in \cite{Nan19}. This work provides a categorical framework for viewing DMT, taking inspiration from an idea in ordinary Morse theory (described in \cite{CJS70} - an unpublished manuscript of Cohen, Jones, and Segal). This perspective not only aligns with our goals for this essay, but also can be used to understand and apply DMT in more abstract contexts.

We first note that a \emph{derived} perspective is perhaps foreshadowed by the presence of \emph{zig-zags} of simplices and the definition of $\mathscr{C}$-compatibility. Readers who share this intuition will not be surprised to learn that Nanda takes localisations of categories as a main focal point.

\cite{Nan19} works at the generality of \emph{cellular categories}, which are poset enriched categories (more naturally thought of as a strict 2-category) such that each Hom-poset has a weakly indecomposable minimal element. We call such an element (morphism) \emph{atomic}. These cellular categories are a generalisation of the \emph{entrance path category} associated to any finite poset, which we define now.

\begin{df}\label{definition:5.1}
  Given a finite poset $P$ (usually a CW complex), the associated \emph{entrance path category} $\mathcal{E}(P)$ is defined by
  \begin{align*}
    \mathrm{Ob}(\mathcal{E}(P))&=P_{0}, \\
    \mathrm{Hom}_{\mathcal{E}(P)}(x,y)&=\mathrm{Path}_{P}(x,y)
  \end{align*}where $\mathrm{Path}_{P}$ denotes the poset of \emph{strictly decreasing} paths from $x$ to $y$ in $P$. The partial order enrichment is induced by the obvious inclusion of paths and composition given by concatenation of paths. This has atomic morphisms given by the shortest path, $x\geq y$.

  We can recover the original poset by collapsing all the Hom posets onto their atoms, via the canonical \emph{projection} functor $$\Pi:\mathcal{E}(P)\to (P,\leq)^{\mathrm{op}}.$$
\end{df}

The entrance path category $\mathcal{E}(P)$ encodes not just the relations in the poset $P$, but also the different ways in which relations can be composed, capturing richer `homotopical' information. This motivation should seem very natural to readers familiar with the reasons for studying $\infty$-categories, often regarded as `homotopy coherent category theory'.

Our definition of acyclic partial matchings can now be equivalently rephrased as a selection of atomic morphisms in $\mathcal{E}(K,\leq)$, whose sources and targets are pairwise disjoint, subject to an analogue of the acyclicity condition.

This viewpoint of partial matchings inspires Nanda to define the notion of a \emph{Morse system} on a more general cellular category. These are defined in a novel way using two extra axioms (the \emph{lifting} and \emph{switching} axioms) but do indeed subsume the notion of a partial matching that we defined earlier. Furthermore, given a Morse system $\Sigma$ on a cellular category, we can define the notion a $\Sigma$-\emph{critical object} which agrees with our definition of $\Sigma$-critical simplices.

\begin{df}
  Given a collection (even a proper class) of morphisms $\mathcal{W}$ in a category $\mathcal{C}$, we can \emph{localise} $\mathcal{C}$ at the morphisms $\mathcal{W}$, denoted $\mathcal{C}[\mathcal{W}^{-1}]$, which we think of as the category $\mathcal{C}$ with all its morphisms formally inverted. This is described rigorously as a functor $$L_{\mathcal{W}}:\mathcal{C}\overset{}{\longrightarrow}\mathcal{C}[\mathcal{W}^{-1}]$$satisfying the \emph{universal property of the localisation}, which states that any functor $F:\mathcal{C}\to \mathcal{D}$ that sends all morphisms $\mathcal{W}$ to isomorphisms in $\mathcal{D}$, admits a unique factorisation through $L$:
  \[
    \begin{tikzcd}[scale=1, column sep=5mm, row sep=7mm]
      \mathcal{C} \ar[rr, "F"] \ar[dr, "L"']  && \mathcal{D} \\
      & \mathcal{C}[\mathcal{W}^{-1}] \ar[ur, "\exists!"', dashed]
    \end{tikzcd}
  \]
\end{df}

\begin{df}

  Given a cellular category $E$ equipped with a Morse system $\Sigma$, we define the \emph{discrete flow category} $\mathsf{Flo}_{\Sigma}E$ as the full subcategory of $\mathsf{Loc}_{\Sigma}E$ generated by the $\Sigma$-critical objects.
\end{df}

Nanda goes on to remark that the inclusion map $$J_{\Sigma}:\mathsf{Flo}_{\Sigma}E\hookrightarrow\mathsf{Loc}_{\Sigma}E$$ does not necessarily induce a homotopy equivalence on the \emph{classifying spaces} of these poset enriched categories, where the classifying space is obtained via the \emph{geometric nerve} functor (followed by the geometric realisation functor of a simplicial set), denoted $\Delta$. This nerve functor was first detailed in \cite{Dus01}, and was subsequently shown (in \cite{BC03}) to produce classifying spaces homotopy equivalent to those induced by other known nerve functors on poset enriched categories.

To remedy this, Nanda imposes additional hypotheses on the definition of a Morse system that ensure the inclusion map does actually induce a homotopy equivalence of classifying spaces. Nanda calls these \emph{mild Morse systems}, and in section 4, shows that Morse systems arising from acyclic partial matchings on finite CW complexes (hence simplicial complexes) are in fact always \emph{mild}. With these definitions introduced, Nanda's paper uses an argument rooted in Quillen's theorem A (from \cite{Qui73}, now ubiquitous in most areas of topology) to obtain the following result.

\begin{thm}\label{theorem:5.4}
  Let $E$ be a cellular category equipped with a Morse system $\Sigma$, and let $\mathsf{Flo}_{\Sigma}E$ be the the full subcategory of the localisation $\mathsf{Loc}_{\Sigma}E$ generated by the $\Sigma$-critical objects. Then, the localisation functor on poset enriched categories, $$L_{\Sigma}:E\overset{}{\longrightarrow} \mathsf{Loc}_{\Sigma}E,$$induces a homotopy-equivalence on their corresponding classifying spaces.

  Furthermore, if $\Sigma$ is mild, the inclusion functor, $$J:\mathsf{Flo}_{\Sigma}E\overset{}{\longrightarrow} \mathsf{Loc}_{\Sigma}E,$$also induces a homotopy equivalence of classifying spaces.

\end{thm}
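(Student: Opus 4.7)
The plan is to deduce both claims via Quillen's Theorem~A, applied at the level of poset-enriched categories (equivalently, strict $2$-categories), where classifying spaces are formed by Duskin's geometric nerve $\Delta$ followed by geometric realisation. The upshot of the $2$-categorical Theorem~A (as developed, for instance, by Bullejos--Cegarra) is the same as its classical counterpart: a $2$-functor induces a homotopy equivalence on classifying spaces provided every comma $2$-category it spawns has contractible classifying space. Thus both statements reduce to contractibility verifications for certain comma categories.

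First I would tackle $L_{\Sigma}: E \to \mathsf{Loc}_{\Sigma}E$. Fix an object $d \in \mathsf{Loc}_{\Sigma}E$; the comma category $(L_{\Sigma}\!\downarrow\!d)$ has objects indexed by zig-zags in $E$ built from ordinary morphisms and formally-inverted elements of $\Sigma$, terminating at $d$. The lifting axiom of a Morse system lets one push all inverted atoms of any such zig-zag to one side (giving a span $c \leftarrow c' \to d$), and the switching axiom ensures the resulting normal form is essentially unique. I would turn this normal-form procedure into a zig-zag of natural transformations between the identity on $(L_{\Sigma}\!\downarrow\!d)$ and a constant functor, thereby contracting its classifying space and yielding $\Delta E \simeq \Delta\mathsf{Loc}_{\Sigma}E$.

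Next I would turn to the inclusion $J: \mathsf{Flo}_{\Sigma}E \hookrightarrow \mathsf{Loc}_{\Sigma}E$, assuming $\Sigma$ is mild, and apply Quillen's Theorem~A once more. Mildness should be engineered precisely to guarantee that for each object $c \in \mathsf{Loc}_{\Sigma}E$, the comma category $(c\!\downarrow\!J)$ is contractible: mildness yields a canonical flow via inverted $\Sigma$-morphisms from any object to a $\Sigma$-critical one, and I would use this flow to produce an initial object (or, failing that, an explicit contracting deformation) of $(c\!\downarrow\!J)$, from which the homotopy equivalence follows.

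The main obstacle will be the $2$-categorical bookkeeping. Unlike in the $1$-categorical Theorem~A, it is not enough to locate initial objects in the underlying $1$-truncation of a comma category; one must also furnish consistent $2$-cells witnessing the deformation inside the Duskin nerve, and this is where the switching axiom together with the extra mildness hypotheses must be deployed with care. Concretely, I expect the heart of the argument to lie in showing that the canonical flow to critical objects assembles into a coherent zig-zag of $2$-cells and not merely an object-wise choice, and this is the step I would anticipate being the most delicate.
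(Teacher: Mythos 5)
Your strategy --- classifying spaces via the Duskin geometric nerve, the Bullejos--Cegarra $2$-categorical version of Quillen's Theorem~A, contractibility of the relevant comma $2$-categories established from the lifting and switching axioms, and mildness invoked exactly to control the fibres of the inclusion $J_{\Sigma}$ --- is the same route this essay attributes to the original proof in \cite{Nan19}; the theorem is stated here as a citation, with no independent argument beyond that attribution. So your proposal matches the paper's (cited) approach, with the delicate point you flag (coherence of the $2$-cells in the deformation) being precisely where the Morse-system axioms do their work.
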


The last section of \cite{Nan19} mentions how the above theorem can be applied to cellular cosheaf homology (for which simplicial cosheaf homology is a special case of). We now outline this application for the case of simplicial complexes. First we state another result, found in section 3 of \cite{Nan19}.

\begin{prop}\label{proposition:5.5}
  For any finite regular CW complex $X$, there is a homotopy equivalence between $X$ and the classifying space of its entrance path category, $$X\overset{\sim }{\longrightarrow}{|\Delta \mathcal{E}(X)|}.$$
\end{prop}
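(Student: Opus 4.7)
The plan is to factor the desired homotopy equivalence as the composition
\[
|\Delta \mathcal{E}(X)| \;\xrightarrow{\;\simeq\;}\; |\Delta(X,\leq)^{\mathrm{op}}| \;\cong\; X,
\]
where the first map is induced by the projection functor $\Pi$ of \Cref{definition:5.1} and the second is the classical barycentric-subdivision homeomorphism. A homotopy inverse then yields the equivalence $X \overset{\sim}{\longrightarrow} |\Delta \mathcal{E}(X)|$ stated in the proposition.

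For the second map I would invoke the standard fact that, for a finite regular CW complex, the order complex of the face poset is precisely the barycentric subdivision of $X$ and hence homeomorphic to $X$; regularity is what ensures no attaching-map data is lost when we pass to the poset of cells. Because $(X,\leq)^{\mathrm{op}}$ carries only trivial $2$-cells, Duskin's geometric nerve agrees with the ordinary simplicial nerve on it via the comparison in \cite{BC03}, so the order complex and $|\Delta(X,\leq)^{\mathrm{op}}|$ coincide. For the first map I would apply a $2$-categorical version of Quillen's Theorem A to $\Pi$: it suffices to check that for each cell $\sigma$, the comma-type fiber $(\Pi \downarrow \sigma)$ has contractible classifying space. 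Unwinding, this fiber is a poset-enriched category whose objects are cells $x \geq \sigma$ and whose hom-posets of entrance paths are each contractible, by virtue of containing an atomic minimum. Moreover $(\sigma, \mathrm{id}_{\sigma})$ plays the role of a $2$-terminal object: each hom-poset into it admits a canonical minimum (the atomic path $x \geq \sigma$), and together these produce a canonical lax natural transformation to a constant functor, contracting $|\Delta(\Pi \downarrow \sigma)|$ to a point.

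The main obstacle will be the rigorous application of Quillen's Theorem A in the poset-enriched setting, since the classical statement concerns ordinary categories and their simplicial nerves rather than Duskin's geometric nerve. I would address this either by citing an existing $2$-categorical version of Theorem A, or by invoking the nerve-comparison results of \cite{BC03} to transfer the problem to a more standard nerve construction where Theorem A applies directly. A more hands-on alternative would be to construct an explicit deformation retraction of $|\Delta \mathcal{E}(X)|$ onto the subcomplex of atomic-path simplices -- which is precisely $|\Delta(X,\leq)^{\mathrm{op}}|$ -- by using the minimum of each hom-poset of $\mathcal{E}(X)$ to collapse every non-atomic simplex onto its atomic shadow, thereby closing the loop.
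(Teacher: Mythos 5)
The paper does not actually prove \Cref{proposition:5.5}: it is imported verbatim from Section~3 of \cite{Nan19}, so there is no internal argument to compare yours against. Judged on its own terms, your sketch is essentially a correct reconstruction of the argument underlying Nanda's result. The factorisation through the face poset is the right move: for a finite \emph{regular} CW complex the order complex of the face poset triangulates $X$, and since $(X,\leq)^{\mathrm{op}}$ has only identity $2$-cells its geometric (Duskin) nerve is just the ordinary nerve, so $|\Delta(X,\leq)^{\mathrm{op}}|\cong X$. Your treatment of the projection $\Pi$ of \Cref{definition:5.1} is also sound in outline: each hom-poset $\mathrm{Path}_X(x,y)$ has the atomic path as its minimum under inclusion, and in the comma fibre over a cell $\sigma$ the assignment $x\mapsto(\text{atomic path }x\geq\sigma)$ does give a lax natural transformation to the constant functor at $(\sigma,\mathrm{id}_\sigma)$, because for any entrance path $f\colon x\to y$ the composite of $f$ with the atomic path $y\geq\sigma$ dominates the atomic path $x\geq\sigma$ in $\mathrm{Path}_X(x,\sigma)$; this is exactly the kind of contraction one wants.

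The gap you flag is the genuine one, and it is where all the real work sits: Quillen's Theorem~A and the statement ``lax natural transformations induce homotopies of classifying spaces'' are not available off the shelf for poset-enriched categories with the geometric nerve, so the argument only closes once you invoke the $2$-categorical comparison and homotopy-invariance results of \cite{BC03} (or a $2$-categorical Theorem~A in that tradition), which is precisely the machinery Nanda leans on. Your ``hands-on'' alternative --- collapsing every non-atomic simplex of $\Delta\mathcal{E}(X)$ onto its atomic shadow --- is vaguer and would need a genuine collapsing scheme (the non-atomic simplices are not independently collapsible one at a time), so I would treat it as a heuristic rather than a fallback proof. With the citation route made explicit, your proposal is a faithful stand-in for the proof the paper delegates to \cite{Nan19}.
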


Now, given a cosheaf $\mathscr{C}$ on $K$ with a compatible acyclic partial matching, we consider the functor $$\mathscr{C}\circ \Pi:\mathcal{E}(K,\leq)\to \mathsf{Vect}^{\mathrm{fd}}_{\mathbb{F}},$$where $\Pi$ is the projection functor described in \Cref{definition:5.1}. Then the mild Morse system $\Sigma$ (induced by the matching) allows us to uniquely factor the aforementioned functor, as shown in the following diagram.

\[
  \begin{tikzcd}[scale=1, column sep=8mm, row sep=7mm]
    & \mathcal{E}(K,\leq) \ar[dr, "\mathscr{C}\Pi"] \ar[d, "L_{\Sigma}"'] \\
    \mathsf{Flo}_{\Sigma}\mathcal{E}(K,\leq) \ar[r, "J_{\Sigma}"] & \mathsf{Loc}_{\Sigma}\mathcal{E}(K,\leq) \ar[r, "\exists!\mathscr{C}'",dashed]  &
    \mathsf{Vect}^{\mathrm{fd}}_{\mathbb{F}}\;\;\;\;\;\;\;\;\;\;
    % "\;" added for better alignment
  \end{tikzcd}
\]

To conclude, we need some intuition for the objects involved. We first note that the set of morphisms between two simplices in $\mathcal{E}(K,\leq)$ combinatorially encodes all the downward pointing paths in $K$ between them. This passes the \emph{global} path data of $(K,\leq)$ to \emph{local} data between any two objects. We can then think of the localisation at $\Sigma$ as adding a formal inverse to the incident simplices that we can safely remove with a sequence of elementary collapses. The reason we needed to use $\mathcal{E}(K,\leq)$, is that it allows us (after passing to the localisation and restricting to the critical simplices) to retain the important information of zig-zags that move through incident matched simplices. These zig-zags are what we used to calculate the differentials in the Morse complex. When we take the discrete flow subcategory of the localised category, we remove all the objects corresponding to matched simplices, allowing us to reduce the dimensions of the vector spaces in the chain complex. Crucially, we also retain the information necessary to consider zig-zag paths, which remain encoded in the Hom sets of the critical simplices.\footnote{A more detailed description of Nanda's discrete flow category can be found in \cite{Hem24}, which also produces and implements an algorithm for computing the flow category's Hom posets.}

This intuition is the starting point towards fleshing out, in a rigorous manner, that the cosheaf $\mathscr{C}'J_{\Sigma}$ on $\mathsf{Flo}_{\Sigma}\mathcal{E}(K,\leq)$ produces a chain complex\footnote{This construction involves homological algebra which would not fit well in to this essay. The details can be found in \cite{Hem24}.} which agrees with the Morse complex `$\mathscr{M_{\bullet}}$' associated to $C_{\bullet}(K;\mathscr{C})$: $$H_{\bullet}(\mathsf{Flo}_{\Sigma}\mathcal{E}(K,\leq);\mathscr{C}'J_{\Sigma})\cong H_{\bullet}(\mathscr{M_{\bullet}}).$$So the homotopy invariances of \Cref{theorem:5.4} and \Cref{proposition:5.5} allow us to obtain the sequence of isomorphisms $$H_{\bullet}(\mathscr{M_{\bullet}})\cong H_{\bullet}(\mathsf{Flo}_{\Sigma}\mathcal{E}(K,\leq);\mathscr{C}'J_{\Sigma})\cong H_{\bullet}(\mathcal{E}(K\leq),\mathscr{C}P)\cong H_{\bullet}(K;\mathscr{C})$$Hence showing the cosheaf homology of $\mathscr{C}$ is recovered by the Morse complex as required.

To properly explicate the argument presented above would require delving into theory beyond the scope of this essay, such as spectral sequences, and more homotopy theory. The author believes (though is still in the process of reading) \cite{GJ99} could be a good starting point for the homotopy theory and \cite{Seg68} seems to be relevant for the spectral sequences (however the author has not yet read this).

\begin{rem}
  The author notes that several concepts found within this perspective of discrete Morse theory (such as entrance path categories and localisations) hint at underlying higher categorical structures. This motivates the author to try reformulating these ideas within the framework of $\infty$-categories, perhaps using models like the simplicial nerve to capture homotopy-coherent information more fully than the 2-categorical approach using entrance paths. Potentially, a further direction could be to investigate whether the DMT's computational purpose to `compress' chain complex data can be generalised to simplify computations within abstract cohomology theories pertinent to higher category theory (such as those described in chapter seven of \cite{Lur09}). While abstract higher category theory might initially seem computationally challenging, the most commonly used model (quasicategories - simplicial sets satisfying the weak Kan condition) is fundamentally combinatorial. Although these simplicial objects are generally infinite, they often admit finite presentations (e.g., nerves of finitely presented categories) or possess sufficient structure to allow computation, suggesting the potential feasibility of developing such a generalised Morse-theoretic compression mechanism. Unfortunately, the author will need to read a lot more to understand if this is worth exploring.
\end{rem}

\section{The Morse Mayer-Vietoris sequence}\label{section:6}

In this section, we will see how the Morse complex can be united with the Mayer-Vietoris sequence, forming the \emph{Morse Mayer-Vietoris} short exact sequence. To do so, we must first impose an extra compatibility constraint on the type of partial matchings we consider.

\begin{df}
  Let $M$ be a subcomplex of $K$, and let $\Sigma$ be a $\mathscr{C}$-compatible acyclic partial matching on $K$. We say $\Sigma$ is \emph{compatible with} $M$ if for every matched pair $(\sigma\lhd\tau) \in \Sigma$, we have $\sigma \in M$ if and only if $\tau \in M$. We define the restriction of $\Sigma$ to $M$ by $$\Sigma_{M}:=\{(\sigma\lhd\tau)\in \Sigma \mid \tau,\sigma \in M \}.$$
\end{df}

Note that as $M$ is a subcomplex, it is downward closed with respect to the face ordering, so the `only if' part of the above definition is the only new constraint. More intuitively, we are requiring that the partial matching flows \emph{into} $M$, never out of $M$. Now we form the Morse Mayer-Vietoris sequence.

\begin{thm}
  Suppose we have a cosheaf $\mathscr{C}$ on $K$, a subcomplex decomposition $K=L\cup M$ \emph{(}writing $I=L\cap M$\emph{)}, and an acyclic partial matching $\Sigma$ which is compatible with both $\mathscr{C}$ and the subcomplexes $L$ and $M$. Then we can form the \emph{Morse Mayer-Vietoris} short exact sequence: $$0\overset{}{\longrightarrow}\mathscr{M}_{\bullet}^{I}\overset{}{\longrightarrow}\mathscr{M}_{\bullet}^{L}\oplus\mathscr{M}_{\bullet}^{K}\overset{}{\longrightarrow}\mathscr{M}_{\bullet}^{K}\overset{}{\longrightarrow}0.$$Here $\mathscr{M}_{\bullet}^{N}$ denotes the Morse complex associated to $C_{\bullet}(N;\mathscr{C}|_{N})$ and the sequence is quasi-isomorphic to the Mayer-Vietoris sequence of \Cref{theorem:3.2}.
\end{thm}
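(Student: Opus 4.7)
The plan is to mirror the proof of \Cref{theorem:3.2}, replacing the standard chain complexes with their Morse counterparts and using the extra compatibility of $\Sigma$ with $L$ and $M$ to make everything restrict cleanly. The whole point of the ``$\sigma \in M \iff \tau \in M$'' condition is that the matching cannot ``leak'' out of a subcomplex, which simultaneously guarantees that $\Sigma_N$ is itself a valid partial matching for each $N \in \{L, M, I\}$ and that $\Sigma$-paths between critical simplices of $N$ stay inside $N$.

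First I would verify that for $N \in \{L, M, I\}$, the restriction $\Sigma_N$ is a $\mathscr{C}|_N$-compatible acyclic partial matching on $N$. Acyclicity and $\mathscr{C}|_N$-compatibility are inherited directly. Moreover, a simplex $\alpha \in N$ is $\Sigma_N$-critical if and only if it is $\Sigma$-critical, because by compatibility of $\Sigma$ with $N$ any pair $(\sigma \lhd \tau) \in \Sigma$ with $\sigma \in N$ automatically has $\tau \in N$ and therefore already sits inside $\Sigma_N$. Note also that compatibility of $\Sigma$ with both $L$ and $M$ formally implies compatibility with $I = L \cap M$, so this step really does handle all three subcomplexes. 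Hence $\mathscr{M}^N_k = \bigoplus_\alpha \mathscr{C}_\alpha$ (with $\alpha$ ranging over $\Sigma$-critical simplices in $N_k$) is levelwise a direct summand of $\mathscr{M}^K_k$.

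Next I would show that the Morse differentials $\partial^{\mathscr{C}|_N, \Sigma_N}_k$ agree with the restriction of $\partial^{\mathscr{C}, \Sigma}_k$ to the summands indexed by critical simplices in $N$. The crucial observation is that every $\Sigma$-path $\rho = (\sigma_1 \lhd \tau_1 \rhd \sigma_2 \lhd \cdots \lhd \tau_m)$ starting from a simplex in $N$ stays entirely in $N$: each $\sigma_i$ lies in $N$ because $N$ is closed under taking faces, and each $\tau_i$ lies in $N$ by the no-leak condition. Thus when $\alpha, \omega$ are critical simplices in $N$, the sum defining $[\alpha : \omega]_\Sigma$ picks up contributions only from data inside $N$, and the formula reduces to $[\alpha:\omega]_{\Sigma_N}$. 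This makes the inclusions $i^\Sigma_\bullet : \mathscr{M}^I_\bullet \hookrightarrow \mathscr{M}^L_\bullet$, etc.\ genuine chain maps, so we can set $p^\Sigma_\bullet = i^\Sigma_\bullet \oplus j^\Sigma_\bullet$ and $q^\Sigma_\bullet = i'^{\,\Sigma}_\bullet \oplus (-j'^{\,\Sigma}_\bullet)$. Short exactness then follows from exactly the block-matrix decomposition used in \Cref{theorem:3.2}, now applied to the partition $K^{\mathrm{crit}}_k = (L^{\mathrm{crit}}_k \setminus M^{\mathrm{crit}}_k) \sqcup I^{\mathrm{crit}}_k \sqcup (M^{\mathrm{crit}}_k \setminus L^{\mathrm{crit}}_k)$ of critical simplices.

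Finally, the quasi-isomorphism to the standard Mayer-Vietoris sequence should come from assembling the chain equivalences $\Phi^N_\bullet : \mathscr{M}^N_\bullet \overset{\sim}{\to} C_\bullet(N; \mathscr{C}|_N)$ provided by \Cref{thm:4.2} into a commuting ladder of short exact sequences and invoking the five-lemma on the induced long exact sequences in homology. The main obstacle I expect is precisely this last step: \Cref{thm:4.2} only asserts a quasi-isomorphism existentially, so to obtain commuting comparison squares one must unpack the explicit flow-style formula for $\Phi^N$ and check its naturality with respect to subcomplex inclusion. Happily this naturality reduces yet again to the ``paths stay in $N$'' observation above, so once that is carefully recorded, commutativity with $p_\bullet, q_\bullet, i'_\bullet, j'_\bullet$ becomes automatic and the five-lemma finishes the argument.
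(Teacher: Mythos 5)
Your proposal is correct and follows essentially the same route as the paper: the ``no-leak'' condition makes $\Sigma$-paths from simplices of a subcomplex stay inside it, so critical simplices and the Morse differentials restrict, the Morse inclusions become chain maps, exactness follows by the same block decomposition as in \Cref{theorem:3.2}, and the quasi-isomorphism comes from the levelwise equivalences $\mathscr{M}^N_\bullet \simeq C_\bullet(N;\mathscr{C}|_N)$ commuting with the inclusions. Your explicit flagging of the naturality check for these equivalences (and the redundant five-lemma step) only elaborates what the paper asserts via its commutative cube.
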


\begin{proof}
  We note that we don't have to impose the compatibility constraint on $I$ as this is implied by the constraint on $L$ and $M$. The compatibility constraint on any pair of subcomplexes, say $B\subseteq A$, ensures that critical simplices of $B$ remain critical in $A$, allowing us to obtain an inclusion map $$\iota_{\bullet}^{B,A}:\mathscr{M}_{\bullet}^{B}\hookrightarrow\mathscr{M}_{\bullet}^{A}.$$
  The compatibility constraint ensures that there are no $\Sigma_{A}$-paths starting at a face of a $B$-simplex and ending outside of $B$. This implies that $[\alpha:\omega]_{\Sigma}$ is zero for all $\alpha \in B$ and $\omega \in A\backslash B$. Hence, the inclusion forms a chain map:
  \[
    \begin{tikzcd}[scale=1, column sep=12mm, row sep=9mm]
      \cdots \ar[r, "\partial_{k+1}^{\Sigma_{B}}"] & \mathscr{M}^{B}_{k} \ar[d, "\iota^{B,A}_{k}",hookrightarrow]  \ar[r, "\partial_{k}^{\Sigma_{B}}",hookrightarrow] & \mathscr{M}^{B}_{k-1} \ar[d, "\iota^{B,A}_{k-1}",hookrightarrow] \ar[r, "\partial_{k-1}^{\Sigma_{B}}"] & \cdots \\
      \cdots \ar[r, "\partial_{k+1}^{\Sigma_{A}}"]  & \mathscr{M}^{A}_{k}  \ar[r, "\partial_{k}^{\Sigma_{A}}"] & \mathscr{M}^{A}_{k-1} \ar[r, "\partial_{k-1}^{\Sigma_{A}}"] & \cdots
    \end{tikzcd}
  \]
  Furthermore, the inclusion maps agree with the standard inclusions, $$C_{\bullet}(B;\mathscr{C}|_{B})\hookrightarrow C_{\bullet}(A;\mathscr{C}|_{A}),$$ used in the proof of \Cref{theorem:3.2}. This observation gives us that the inclusion maps commute with the chain homotopy equivalence $\mathscr{M}_{\bullet}\simeq C_{\bullet}^{\mathscr{C}}$:

  \[
    \begin{tikzcd}[scale=1, column sep=9mm, row sep=7mm]
      C_{\bullet}(B;\mathscr{C}|_{B}) \ar[r, "i_{\bullet}^{B,A}",hookrightarrow] \ar[d, "\sim" vertical,leftrightarrow] & C_{\bullet}(A;\mathscr{C}|_{A}) \ar[d, "\sim" vertical,leftrightarrow]  \\
      \mathscr{M}_{\bullet}^{B} \ar[r, "\iota_{\bullet^{B,A}}",hookrightarrow] & \mathscr{M}_{\bullet}^{A}
    \end{tikzcd}
  \]

  Hence, applying this to the subcomplexes $I,L,M$ of $K$, yields the commutative cube diagram:

  \[
    \begin{tikzcd}[row sep={50,between origins}, column sep={50,between origins}]
      & C_{\bullet}(I;\mathscr{C}|_{I})
      \ar[rr, "i_{\bullet}^{I,M}", hookrightarrow]
      \ar[dl, "i_{\bullet}^{I,L}"', hookrightarrow, outer sep=-3]
      \ar[dd, "\sim" vertical, leftrightarrow]
      & & C_{\bullet}(M;\mathscr{C}|_{M})
      \ar[dl, "i_{\bullet}^{M,K}"', hookrightarrow, outer sep=-4]
      \ar[dd, "\sim" vertical, leftrightarrow]
      \\
      C_{\bullet}(L;\mathscr{C}|_{L})
      %invisible for label to look good
      \ar[rr, crossing over, phantom, shift left=2]
      \ar[rr, crossing over, phantom, shift left=4]
      \ar[rr, crossing over, "i_{\bullet}^{L,K}",hookrightarrow,outer sep=2]
      \ar[dd, "\sim" vertical, leftrightarrow]
      & & C_{\bullet}(K;\mathscr{C}|_{K})
      \\
      & \mathscr{M}_{\bullet}^{I}
      \ar[rr, "\iota_{\bullet}^{I,M}" pos=0.8, hookrightarrow]
      \ar[dl, "\iota_{\bullet}^{I,L}"' pos=0.3, hookrightarrow, outer sep=-2]
      & & \mathscr{M}_{\bullet}^{M}
      \ar[dl, "\iota_{\bullet}^{M,K}", hookrightarrow, outer sep=-2]
      \\
      \mathscr{M}_{\bullet}^{L}
      \ar[rr, crossing over, "\iota_{\bullet}^{L,K}"', hookrightarrow]
      & & \mathscr{M}_{\bullet}^{K}
      %invisible for \sim to look good
      \ar[uu, crossing over, phantom, shift left=2]
      \ar[uu, crossing over, "\sim" vertical, leftrightarrow]
    \end{tikzcd}
  \]

  Finally, we take the direct sum across the $L$ and $M$ vertices of the top and bottom faces of the cube and negate the $M\subseteq K$ inclusion maps to yield the sequence:

  \[
    \begin{tikzcd}[scale=1, column sep=10mm, row sep=7mm]
      0 \ar[r, ""] &
      C_{\bullet}(I;\mathscr{C}|_{I})
      \ar[r, "i^{I,L}_{\bullet}\oplus i^{I,M}_{\bullet}", outer sep=3, hookrightarrow, pos=0.7] &
      C_{\bullet}(L;\mathscr{C}|_{L})\oplus C_{\bullet}(M;\mathscr{C}|_{M})
      \ar[r, "i^{L,K}_{\bullet}\oplus -i^{M,K}_{\bullet}", outer sep=3,twoheadrightarrow, pos=0.7] &
      C_{\bullet}(K;\mathscr{C}|_{K}) \ar[r, ""] & 0 \\
      0 \ar[r, ""] &
      \mathscr{M}_{\bullet}^{I}
      \ar[r, "\iota^{I,L}_{\bullet}\oplus\iota^{I,M}_{\bullet}", hookrightarrow] \ar[u, "\sim" vertical, leftrightarrow]  &
      \mathscr{M}_{\bullet}^{L}\oplus\mathscr{M}_{\bullet}^{M}
      \ar[r, "\iota^{L,K}_{\bullet}\oplus{-\iota^{M,K}_{\bullet}}", twoheadrightarrow] \ar[u, "\sim" vertical, leftrightarrow] &
      \mathscr{M}_{\bullet}^{K} \ar[r, ""] \ar[u, "\sim" vertical, leftrightarrow] & 0
    \end{tikzcd}
  \]

  The top row is the Mayer-Vietoris sequence of \Cref{theorem:3.2} and so we have formed the Morse Mayer-Vietoris sequence, and shown it to be quasi-isomorphic to the Mayer-Vietoris sequence (and hence isomorphic under the homology functor) as required.
\end{proof}

\begin{cor}
  We have isomorphic long exact sequences on Homology:
  \[
    \begin{tikzcd}[scale=0.8, column sep=5mm]
      \cdots \ar[r] &
      H_{k}(I;{\mathscr{C}}|_{I}) \ar[r] \ar[d, "\cong" vertical, leftrightarrow] &
      H_{k}(L;{\mathscr{C}}|_{L})\oplus H_{k}(M;{\mathscr{C}}|_{M}) \ar[r] \ar[d, "\cong" vertical, leftrightarrow] &
      H_{k}(K;\mathscr{C}) \ar[r] \ar[d, "\cong" vertical, leftrightarrow] &
      H_{k-1}(I;{\mathscr{C}}|_{I}) \ar[r] \ar[d, "\cong" vertical, leftrightarrow] & \cdots \\
      \cdots \ar[r] &
      H_{k}(\mathscr{M}^{I}_{\bullet}) \ar[r] &
      H_{k}(\mathscr{M}^{L}_{\bullet})\oplus H_{k}(\mathscr{M}^{M}_{\bullet}) \ar[r] &
      H_{k}(\mathscr{M}^{K}_{\bullet}) \ar[r] &
      H_{k-1}(\mathscr{M}^{I}_{\bullet}) \ar[r] & \cdots \\
    \end{tikzcd}
  \]
\end{cor}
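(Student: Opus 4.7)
The plan is to mirror the proof of \Cref{theorem:3.2} at the level of Morse complexes, using as the key ingredient a natural chain-level inclusion $\iota^{B,A}_{\bullet}\colon \mathscr{M}^{B}_{\bullet}\hookrightarrow \mathscr{M}^{A}_{\bullet}$ whenever $B\subseteq A$ are subcomplexes of $K$ on which $\Sigma$ restricts compatibly, and then to upgrade the resulting short exact sequence to a quasi-isomorphism with the Mayer-Vietoris sequence using the chain equivalences of \Cref{thm:4.2}.

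First I would verify that compatibility implies the $\Sigma_A$-critical simplices of $A$ that lie in $B$ are exactly the $\Sigma_B$-critical simplices. Compatibility says matched pairs involving a $B$-simplex lie entirely in $B$, so $\Sigma_A$ restricts to $\Sigma_B$ on $B$, and neither side acquires extra critical simplices. This yields a canonical identification of $\mathscr{M}^{B}_{k}$ with a direct summand of $\mathscr{M}^{A}_{k}$ and hence an injection $\iota^{B,A}_{\bullet}$. The crucial observation for the chain-map property is that any $\Sigma_A$-path $\sigma_1\lhd\tau_1\rhd\sigma_2\lhd\cdots$ with $\sigma_1\in B$ stays entirely inside $B$: compatibility forces $\tau_1\in B$, downward closure of $B$ then forces $\sigma_2\in B$, and induction does the rest. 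Consequently $[\alpha:\omega]_{\Sigma_A}=0$ whenever $\alpha$ is critical in $B$ and $\omega$ is critical in $A\setminus B$, while the block $[\alpha:\omega]_{\Sigma_A}$ agrees with $[\alpha:\omega]_{\Sigma_B}$ when both are critical in $B$. Thus $\iota^{B,A}_{\bullet}$ commutes with the Morse differentials.

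With the four inclusions $\iota^{I,L}_{\bullet},\iota^{I,M}_{\bullet},\iota^{L,K}_{\bullet},\iota^{M,K}_{\bullet}$ in hand, I would define
\[
P_{\bullet}:=\iota^{I,L}_{\bullet}\oplus\iota^{I,M}_{\bullet},\qquad Q_{\bullet}:=\iota^{L,K}_{\bullet}\oplus(-\iota^{M,K}_{\bullet}),
\]
and verify exactness by exactly the block-matrix argument of \Cref{theorem:3.2}, now indexed over the $\Sigma$-critical simplices of $K$ rather than all simplices, using the trichotomy $K^{\mathrm{crit}}_k=(L^{\mathrm{crit}}_k\setminus M^{\mathrm{crit}}_k)\sqcup I^{\mathrm{crit}}_k\sqcup(M^{\mathrm{crit}}_k\setminus L^{\mathrm{crit}}_k)$ guaranteed by compatibility. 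This yields the Morse Mayer-Vietoris short exact sequence. For the quasi-isomorphism claim, I would assemble the four chain equivalences $\mathscr{M}^{N}_{\bullet}\simeq C_{\bullet}(N;\mathscr{C}|_{N})$ (for $N=I,L,M,K$) from \Cref{thm:4.2} into a commutative cube whose top face is the Mayer-Vietoris sequence and whose bottom face is the Morse Mayer-Vietoris sequence; commutativity holds because, under the identifications produced in the first step, the $\iota^{B,A}_{\bullet}$ are literal restrictions of the standard inclusions $C_{\bullet}(B;\mathscr{C}|_{B})\hookrightarrow C_{\bullet}(A;\mathscr{C}|_{A})$ to the critical direct summands. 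The five-lemma applied to the two associated long exact sequences then yields the corollary.

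The main obstacle is the naturality in the last step: showing that the chain homotopy equivalence of \Cref{thm:4.2} is natural with respect to compatible subcomplex inclusions. Concretely, this requires tracking how the explicit formulas for the Morse chain map and its homotopy inverse (built out of the zig-zag weights $\omega_{\rho}^{\mathscr{C}}$) restrict from $A$ to $B$, and verifying that no zig-zag contributing to the equivalence on $A$ crosses the boundary $A\setminus B$. The compatibility hypothesis on $\Sigma$ with respect to $L$ and $M$ is precisely what rules out such boundary-crossing zig-zags, so once this containment is established the cube commutes strictly and the remainder of the argument is formal.
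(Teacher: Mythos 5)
Your proposal is correct and follows essentially the same route as the paper: compatibility yields inclusions of Morse complexes whose chain-map property comes from the containment of $\Sigma$-paths starting in a subcomplex, exactness follows by the block decomposition over critical simplices, and the commutative cube built from the chain equivalences of \Cref{thm:4.2} transports the Mayer-Vietoris sequence of \Cref{theorem:3.2} to the Morse version, giving the ladder of long exact sequences. The only cosmetic difference is your appeal to the five lemma at the end, which is superfluous since the vertical maps are already quasi-isomorphisms; all that is needed is the commutativity of the ladder (including the connecting-map squares), which follows from naturality of the long exact sequence for a morphism of short exact sequences of chain complexes.
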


\section{Conclusion}

In conclusion, we have demonstrated the construction of a Mayer-Vietoris sequence for cosheaf homology and explored its simplification using Discrete Morse Theory. The application of these techniques could potentially yield significant computational advantages. Standard algorithms for computing homology typically exhibit super-linear, often polynomial, dependence on the number of simplices in the complex. The standard Mayer-Vietoris sequence provides us with a strategy for reducing the computation on a large complex $K=L\cup M$ to computations on smaller complexes $L$, $M$, and $I=L\cap M$, thereby potentially lowering the computational cost associated with the input size. Unfortunately, the computation of the connecting morphisms in the associated long exact sequence may still be very computationally complex.

On the other hand, (as discussed in an earlier \hyperref[rmk:scythe]{remark}) Discrete Morse Theory provides a significant computational advantage by constructing a quasi-isomorphic chain complex $\mathscr{M}_{\bullet}$, which is often dramatically smaller than the original chain complex $C_{\bullet}(K;\mathscr{C})$, as mentioned.

The Morse Mayer-Vietoris sequence has the potential to unify both these advantages. It provides a short exact sequence involving the smaller Morse complexes, which is quasi-isomorphic to the standard sequence. This allows the associated long exact sequence in homology to be computed using these compressed complexes. Assuming such a sequence has been constructed, it is likely to afford a simpler computation of the connecting homomorphisms in the long exact sequence. While the formula for the Morse boundary map $\partial^{\mathscr{C},\Sigma}_{\bullet}$ involves sums over potentially complex gradient paths, \cite{CGN16} has already demonstrated an efficient algorithm for computing these maps. Computing the connecting morphisms in the associated long exact sequence is likely to be much more efficient than in the standard Mayer-Vietoris sequence due to the reductions in dimension.

Future work could involve developing algorithms to efficiently find acyclic partial matchings that are compatible with given decompositions, or indeed algorithms to find suitable decompositions themselves. Further exploration of the categorical and homotopy-theoretic perspectives on discrete Morse theory, as introduced in \Cref{section:5}, particularly concerning potential $\infty$-categorical formulations and generalisations to other cohomology theories, may also be an exciting area of investigation.

% –––––––––––––––––––––––––––––––BIBLIOGRAPHY–––––––––––––––––––––––––––––––––
\bibliography{bib.bib}
\bibliographystyle{amsalpha}
\end{document}